\numberwithin{equation}{section}
\theoremstyle{plain}
\newtheorem{thm}{Theorem}[section]
\newtheorem{prop}[thm]{Proposition}
\newtheorem{lemma}[thm]{Lemma}
\theoremstyle{definition}
\newtheorem{dfn}[thm]{Definition}
\newtheorem{ex}[thm]{Example}
\theoremstyle{remark}
\newtheorem{rem}[thm]{Remark}
\newcommand{\C}{\mathbb C}
\newcommand{\N}{\mathbb N}
\newcommand{\R}{\mathbb R}
\newcommand{\Z}{\mathbb Z}
\newcommand{\ve}{\varepsilon}
\def\dim{\operatorname{dim}}
\def\ees{{\accent"5E e}\kern-.385em\raise.2ex\hbox{\char'23}\kern-.08em}
\def\EES{{\accent"5E E}\kern-.5em\raise.8ex\hbox{\char'23 }}
\def\ow{o\kern-.42em\raise.82ex\hbox{
\vrule width .12em height .0ex depth .075ex \kern-0.16em \char'56}\kern-.07em}
\def\OW{O\kern-.460em\raise1.36ex\hbox{
\vrule width .13em height .0ex depth .075ex \kern-0.16em \char'56}\kern-.07em}
\begin{document}
\title[The bifurcation set of a real polynomial function]{The bifurcation set of a real polynomial function of two variables and Newton polygons of singularities at infinity}

\author{Masaharu Ishikawa$^\dagger$}
\address{Mathematical Institute, Tohoku University, Sendai, 980-8578, Japan}
\email{ishikawa@m.tohoku.ac.jp}

\author{Tat-Thang Nguyen$^\ddagger$}
\address{Institute of Mathematics, Vietnam Academy of Science and Technology, 18 Hoang Quoc Viet road, Cau Giay district, 10307 Hanoi, Vietnam}
\email{ntthang@math.ac.vn}

\author{Ti\EES N-S\OW N Ph\d{a}m$^*$}
\address{Department of Mathematics, University of Dalat, 1 Phu Dong Thien Vuong, Dalat, Vietnam}
\email{sonpt@dlu.edu.vn}

\thanks{$^{\dagger}$The first author is supported by the Grant-in-Aid for Scientific Research (C), JSPS KAKENHI Grant Number 16K05140}
\thanks{$^{\ddagger}$The second author is supported by the National Foundation for Science and Technology Development (NAFOSTED), Vietnam}
\thanks{$^{*}$The third author is supported by the National Foundation for Science and Technology Development (NAFOSTED), Grant number 101.04-2016.05, Vietnam}

\keywords{atypical value, bifurcation set, toric compactification}

\subjclass[2010]{Primary: 32S20, Secondary: 32S15, 32S30}

\begin{abstract}
In this paper, we determine the bifurcation set of a real polynomial function of two variables
for non-degenerate case in the sense of Newton polygons by using a toric compactification.
We also count the number of singular phenomena at infinity, called ``cleaving'' and ``vanishing''
in the same setting. Finally, we give an upper bound of the number of elements in the bifurcation set
in terms of its Newton polygon. To obtain the upper bound, we apply toric modifications to 
the singularities at infinity successively.
\end{abstract}

\maketitle

\section{Introduction}

Let $f:K^2\to K$ be a polynomial function, where $K$ is either $\C$ or $\R$. 
It is well-known that there exists a finite set $B \subset K$
such that $f:K^2\setminus f^{-1}(B)\to K\setminus B$ is a locally trivial fibration.
The smallest set of $B$ with the above properties is called the {\it bifurcation set},
which we denote by $B_f$.
Let $\Sigma_f$ denote the set of critical values of $f$.
Obviously, $\Sigma_f\subset B_f$. 
An element in $B_f$ caused by such a singular phenomenon at infinity 
is called an {\it atypical value} of $f$ at infinity.

There are many studies aiming to determine the bifurcation sets of polynomial functions.
The results of Suzuki~\cite{Suzuki1974}, Ha and Le~\cite{HL1984} and Ha and Nguyen~\cite{HaHV1989-2}
are known to be pioneering works in these studies,
where geometrical and topological characterizations of atypical values at infinity 
of complex polynomial maps are given.
The Newton polygon is one of the main tools in the study of atypical values at infinity,
for instance see~\cite{nz,lo,zaharia,ishikawa,thang}. 
Concerning real polynomial functions of two variables, Tib\u{a}r and Zaharia gave a characterization 
of the bifurcation set in~\cite{tz}
in terms of the first betti number, the Euler characteristic and vanishing and 
splitting phenomena of atypical fibers over the bifurcation set.
Real polynomial functions of two variables were studied by Coste and de la Puente more precisely
in~\cite{cp}, where they gave a characterization of bifurcation sets by using ``clusters'' and 
gave an algorithm to determine them. 
See~\cite{bp,hn,dt} for further studies related to this topic.

In this paper, we study the bifurcation sets of real polynomial functions
of two variables using Newton polygons, associated toric compactifications
and successive toric modifications.
These techniques were used by the first author in~\cite{ishikawa}
for determining the bifurcation sets of complex polynomial functions algorithmically.

To state our results, we prepare some terminologies.
Set $f(x,y)=\sum_{(m,n)} a_{m,n}x^my^n$, where $m,n\geq 0$.
Let $\varDelta(f)$ be the convex hull
of the integral points $(m,n)\in\R^2$ with $a_{m,n}\ne 0$. 
Note that we do not include the origin $(0,0)$ in the definition of $\varDelta(f)$ 
when $f(0,0)=0$, compare with~\cite{Kouchnirenko1976}.
A vector $P={}^t(p,q)\ne (0,0)$ consisting of coprime integers $p$ and $q$
is called a {\it primitive covector}.
For a given $P$, let $d(P;f)$ denote the minimal value of
the linear function $pX+qY$ for $(X,Y)\in\varDelta(f)$. 
Set $\varDelta(P;f):=\{(X,Y)\in\varDelta(f)\mid pX+qY=d(P;f)\}$, 
which is called a {\it face} of $\varDelta(f)$ if $\dim\varDelta(P;f)=1$.
The partial sum 
$f_P(x,y):=\sum_{(m,n)\in\varDelta(P;f)}a_{m,n}x^my^n$
is called the {\it boundary function} for the covector $P$.
If $\varDelta(P;f)$ is a face then it is called the {\it face function}.
Let $\Gamma_\infty^+(f)$ 
(resp. $\Gamma_\infty^0(f)$, $\Gamma_\infty^-(f)$) denote
the set of faces $\varDelta(P;f)$ of $f$ such that 
$P={}^t(p,q)$ satisfies either $p<0$ or $q<0$ and 
satisfies $d(P;f)>0$ (resp. $d(P;f)=0$, $d(P;f)<0$).
For a set $\Gamma(f)$ of faces of $\varDelta(f)$,
we say that $f$ is {\it non-degenerate} on $\Gamma(f)$ if the system of equations 
$\frac{\partial f_P}{\partial x}=\frac{\partial f_P}{\partial y}=0$ 
has no solutions in $(\R\setminus\{0\})^2$ for 
any face $\varDelta(P;f)$ in $\Gamma(f)$.

A face $\varDelta(P;f)$ in $\Gamma_\infty^0(f)$ 
is called a {\it bad face}. The face function on a bad face is given as
\begin{equation}\label{eq1.1}
   f_P(x,y)=b_P(t(x,y)),\quad t(x,y) = x^{|q|}y^{|p|},
\end{equation}
where $P={}^t(p,q)$ and $b_P$ is a polynomial of one variable $t$.
We say that $f_P$ is {\it Morse} if $b_P(t)$ is a Morse function on $\R\setminus\{0\}$ (i.e., it has only non-degenerate critical points on $\R\setminus\{0\}$).

\begin{thm}\label{thm1.1}
Suppose that $f(x,y)$ is non-degenerate 
on $\Gamma_\infty^+(f)\cup\Gamma_\infty^-(f)$
and the face function on any bad face is Morse.
Then, $c\in B_f$ if and only if one of the following holds: 
\begin{itemize}
\item[(i)] $c\in\Sigma_f$;
\item[(ii)] $c=f(0,0)$ and there exists 
$\varDelta(P;\tilde f)\in\Gamma_\infty^+(\tilde f)$
such that $\tilde f_P(x,y)=0$ has a solution 
in $(\R\setminus\{0\})^2$, where $\tilde f(x,y)=f(x,y)-f(0,0)$;
\item[(iii)]
$c$ is a critical value of $b_P|_{\R\setminus\{0\}}$ in~\eqref{eq1.1} 
for a bad face $\varDelta(P;f)$.
\end{itemize}
\end{thm}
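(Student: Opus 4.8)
\emph{Set-up.} The plan is to embed $f$ into a toric compactification of $\R^2$ and to read off $B_f$ from the behaviour of the extended function along the divisor at infinity, the three cases of Theorem~\ref{thm1.1} corresponding to the sign of $d(P;f)$. First I would fix a complete nonsingular fan $\mathcal F$ in $\R^2$ whose set of rays contains all primitive inner normals of the faces of $\varDelta(f)$ together with ${}^t(1,0)$ and ${}^t(0,1)$, and let $X$ be the real toric surface of $\mathcal F$; it is a smooth compact surface containing $\R^2$ as the chart of the cone spanned by ${}^t(1,0)$ and ${}^t(0,1)$, so $D:=X\setminus\R^2$ is the union of the toric curves $D_P$ over the rays $P={}^t(p,q)$ of $\mathcal F$ with $p<0$ or $q<0$ --- exactly the covectors occurring in $\Gamma_\infty^{\pm}(f)$ and in the bad faces. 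On the chart of a two-dimensional cone $\R_{\ge 0}P+\R_{\ge 0}P'$ with $P'={}^t(p',q')$, choosing coordinates $(u,v)$ with $\{v=0\}=D_P$ and $\{u=0\}=D_{P'}$, a monomial becomes $x^my^n=u^{p'm+q'n}v^{pm+qn}$, so
\begin{equation*}
  f(x,y)=v^{d(P;f)}\bigl(\phi_P(u)+v\,R(u,v)\bigr),
\end{equation*}
where $R$ is regular and $\phi_P(u)$ is the boundary function $f_P$ written in the coordinate $u$ on $D_P^\circ$ (the one-dimensional orbit, i.e. $D_P$ minus its two torus-fixed points); since the lattice segment $\varDelta(P;f)$ is primitively generated by the monomial $t(x,y)$ of~\eqref{eq1.1}, the substitution $u\mapsto t=t(x,y)$ is a diffeomorphism $\R\setminus\{0\}\to\R\setminus\{0\}$ carrying $\phi_P$ to $b_P$ up to a unit. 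I would also record the normal form $f=u^{d(P';f)}v^{d(P;f)}(a+\cdots)$ at each torus-fixed point $D_P\cap D_{P'}$, $a$ being the coefficient of $f$ at the common vertex of the two faces.

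\emph{The dictionary.} Using the normal form, and replacing $f$ by $\tilde f=f-f(0,0)$ to deal with the fact that $\Gamma_\infty^+(f)=\emptyset$ once $f(0,0)\ne0$, I would determine for $c\in\R$ where $\overline{f^{-1}(c)}$ meets $D$. If $d(P;\tilde f)>0$, then $f\equiv f(0,0)$ on $D_P$ and $\overline{f^{-1}(c)}$ meets $D_P^\circ$ only when $c=f(0,0)$ and $\tilde f_P$ has a zero on $(\R\setminus\{0\})^2$; non-degeneracy on $\Gamma_\infty^+(f)$ forces such a zero to be simple (no critical point of $\tilde f_P$ lies there), so over $c=f(0,0)$ one non-compact branch of the fibre runs to infinity in this direction while none does over nearby $c'$. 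If $d(P;f)<0$, then $D_P\subset\bar f^{-1}(\infty)$ and $\overline{f^{-1}(c)}$ meets $D_P^\circ$ exactly at the zeros of $f_P$; non-degeneracy on $\Gamma_\infty^-(f)$ makes these zeros simple, and the normal form shows every fibre acquires the same branches there, so $D_P$ contributes nothing to $B_f$. If $d(P;f)=0$ (a bad face), then $\bar f|_{D_P^\circ}$ is $b_P$ up to the diffeomorphism above, so $\overline{f^{-1}(c)}$ is tangent to $D_P$ precisely when $c$ is a critical value of $b_P|_{\R\setminus\{0\}}$; the Morse hypothesis makes every such tangency ordinary, so a controlled number of non-compact branches of the fibre appears or disappears as $c'$ crosses $c$. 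Finally, using the fixed-point normal forms and the combinatorial constraints relating $d(P;f)$ and $d(P';f)$ at a shared vertex of $\varDelta(f)$, one checks that $\overline{f^{-1}(c)}$ approaches the torus-fixed points of $D$ only in ways already accounted for.

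\emph{The two implications.} If (i) holds, $c\in\Sigma_f\subseteq B_f$. If (ii) or (iii) holds, the dictionary shows that exactly at $c$ the fibre of $f$ gains or loses a non-compact branch, hence its diffeomorphism type --- detected for instance by the Euler characteristic, or by the number of connected components, which are constant over any interval of regular values containing no other value of this type --- is not locally constant at $c$; so no trivialization of $f$ exists near $c$ and $c\in B_f$. Conversely, suppose $c$ satisfies none of (i)--(iii) and pick a compact interval $\bar I$ whose interior contains $c$ and which misses the finite set consisting of $\Sigma_f$, the value $f(0,0)$ when some face realizes (ii), and the critical values of all the $b_P|_{\R\setminus\{0\}}$ on bad faces. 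By the dictionary the fibres over $\bar I$ meet $D$ only along the finitely many simple zeros of the $f_P$ with $d(P;f)<0$, transversally, and stay away from every other stratum of $D$; so on $\bar f^{-1}(\bar I)$, a compact subset of $X$, I would build a vector field lifting $\partial/\partial t$ which is tangent to each $D_P$ and to $\bar f^{-1}(\infty)$ --- near the transversal points this uses the normal form, near a bad face it uses that $\bar f|_{D_P}$ is a submersion there, and near a $D_P$ with $d(P;\tilde f)>0$ there is nothing to do because the fibres avoid a neighbourhood of $D_P^\circ$. Its flow preserves $\R^2$, is defined for all time while $\bar f$ stays in $\bar I$, and trivializes $f$ over the interior of $\bar I$ by Ehresmann's argument; hence $c\notin B_f$.

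\emph{Main obstacle.} The delicate part is the analysis at the torus-fixed points $D_P\cap D_{P'}$ --- the ``corners'' of the compactification, where two faces interact: one must choose $\mathcal F$ fine enough that the closures of the fibres approach these points only in the controlled ways described above (this is where the vertices of $\varDelta(f)$ and the sign constraints on $d(P;f),d(P';f)$ really enter, and also where the role of the constant term $f(0,0)$ must be tracked through $\tilde f$), and then patch the locally adapted vector fields into a single one whose flow does not escape to infinity in finite time. Getting the exponent $d(P;f)$ and the monomial substitution in the normal form $f=v^{d(P;f)}(\phi_P(u)+vR)$ exactly right, and seeing precisely how $b_P$ and $t(x,y)$ sit inside it, is the bookkeeping on which everything else rests.
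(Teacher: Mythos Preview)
Your proposal shares the paper's toric compactification set-up but diverges from it in two essential ways. First, for the implication ``none of (i)--(iii) $\Rightarrow c\notin B_f$'', the paper does \emph{not} construct a trivializing vector field; instead it imports the characterization of $B_f$ via cleaving and vanishing families (Lemma~\ref{lem2.6}, taken from \cite{tz,cp}) and then checks case by case --- splitting according to whether $f$ is convenient, whether $f(0,0)\ne 0$, or whether $x\mid f$ or $y\mid f$ (Lemmas~\ref{lem3.2}--\ref{lem3.4}) --- that no such family can exist near each divisor $E(R_i)$. Second, for ``(iii) $\Rightarrow c\in B_f$'', the paper does not argue that an invariant like the Euler characteristic jumps; it explicitly exhibits cleaving or vanishing families by a local analysis of how $V_{f-c}$, which is tangent to $E(R_i)$ with multiplicity~$2$ thanks to the Morse hypothesis, sits relative to the two sides of $E(R_i)$ (three subcases, Figures~\ref{fig5}--\ref{fig6}). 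Your route via the number of ends is also valid --- the Morse hypothesis forces the intersection number of $\overline{f^{-1}(c')}$ with $D_P^\circ$ to jump by~$2$ across $c$, hence the end-count of $f^{-1}(c')$ in $\R^2$ jumps by~$4$ --- but note that ``branches appear or disappear'' is not quite what happens in the paper's case~(1), where the ends instead reconnect.

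Your Ehresmann approach is a reasonable alternative and has the virtue of being self-contained, but the obstacle you flag at the torus-fixed points is exactly where it bites: at a corner $D_P\cap D_{P'}$ with $d(P;f)<0<d(P';f)$ the map $\bar f$ is genuinely indeterminate, every fibre passes through that point, and a vector field tangent to both $D_P$ and $D_{P'}$ must vanish there, so it cannot lift $\partial/\partial t$. One needs either a further blow-up to resolve the indeterminacy or a careful estimate showing the flow does not reach the corner in finite time; the paper avoids this entirely by working with the cleaving/vanishing criterion, which is local along $\cup E(R_i)$ and never requires a global flow. One minor slip: you write that over $\bar I$ the fibres meet $D$ only along the $D_P$ with $d(P;f)<0$, but they also meet each bad-face divisor $D_P$ at the (moving) solutions of $b_P(t)=c'$; your vector-field construction in the next sentence handles this correctly, so the inconsistency is only in the summary.
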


Remark that the conditions in Theorem~\ref{thm1.1} are satisfied for generic choice of coefficients of $f$.

It is known in~\cite{tz, cp} that the value $c\in B_f$ is characterized by
the existence of a cleaving or vanishing family whose limit is $f=c$.
The precise definitions of these families are given in Section~2.

In the next theorem, we determine the number of cleaving and vanishing families.
For each $\varDelta(P;f)\in\Gamma_\infty^+(f)$, let $r^+(P;f)$ denote the number of 
non-zero real roots of $g_i(v_i)=0$ in~\eqref{eq2.1} below. 
For each bad face $\varDelta(P;f)\in\Gamma_\infty^0(f)$, 
let $r^0(P;f)$ denote the number of non-zero real roots of $\frac{db_P}{dt}(t)=0$.
Let $\text{\rm cleav}(f)$ and $\text{\rm vanish}(f)$ denote the numbers of cleaving families 
and vanishing families of $f$, respectively.

\begin{thm}\label{thm4.1}
Suppose that $f(x,y)$ satisfies the conditions in Theorem~\ref{thm1.1}.
Suppose further that $f$ has only isolated singularities.
Then
\[
\begin{split}
&\text{\rm cleav}(f)+\text{\rm vanish}(f)=2(R^++R^0) 
\quad \text{and}\quad 
0\leq \text{\rm vanish}(f)\leq 2R^0,
\end{split}
\]
where
\[
R^+=\sum_{\varDelta(P;f)\in\Gamma_\infty^+(f)}r^+(P;f),\quad
R^0=\sum_{\varDelta(P;f)\in\Gamma_\infty^0(f)}r^0(P;f).
\]
In particular, if there is no bad face then there is no vanishing family.
\end{thm}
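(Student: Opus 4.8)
The plan is to reduce both quantities to a single count of ``atypical points at infinity'' and to evaluate that count face by face. I would work in the real toric surface $X\supset\R^2$ associated (as in Section~3) to a fan refining the normal fan of $\varDelta(f)$, write $D_\infty:=X\setminus\R^2=\bigcup_P D_P$ for its divisor at infinity, and use that near each $D_P$ the fiber closures $\overline{f^{-1}(c)}\subset X$ are governed by the boundary function $f_P$. By the description of cleaving and vanishing families recalled in Section~2, every such family with limit value $c$ is witnessed at a point $p\in D_\infty$ together with a one-sided neighbourhood of $c$: for $c'$ on that side, $\overline{f^{-1}(c')}$ carries an arc (cleaving) or a loop (vanishing) which degenerates onto $D_\infty$ at $p$ as $c'\to c$, and conversely each such local degeneration yields exactly one family. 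Theorem~\ref{thm1.1} already tells us that the candidate limit values are exactly those in (ii), coming from faces in $\Gamma_\infty^+(f)$, and those in (iii), coming from bad faces; so the whole problem is to locate and count these degenerations over $D_P$ for $\varDelta(P;f)$ in $\Gamma_\infty^+(f)$ and in $\Gamma_\infty^0(f)$.

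\emph{Discarding $\Gamma_\infty^-(f)$ and counting $\Gamma_\infty^+(f)$.} Along $D_P$ with $\varDelta(P;f)\in\Gamma_\infty^-(f)$ one has $|f|\to\infty$, and non-degeneracy there forces $\overline{f^{-1}(c)}$ to meet $D_P$ transversally at the fixed points prescribed by the (simple) zeros of the face function, the same points for every $c$; hence nothing degenerates and these divisors contribute $0$. For $\varDelta(P;f)\in\Gamma_\infty^+(f)$ the weight $d(P;f)$ is positive, so in the adapted chart(s) $f$ takes the form \eqref{eq2.1}, $f=u^{d(P;f)}\bigl(g_i(v_i)+u(\cdots)\bigr)$, and for $c'$ near the limit value $\overline{f^{-1}(c')}$ runs along $D_P$ and peels off precisely over the nonzero zeros of the $g_i$ (simple, by non-degeneracy). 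From the linearisation of \eqref{eq2.1} at such a zero I would check that as $c'$ tends to the limit value the fiber cleaves there — an arc with both ends at infinity collapses onto $D_P$ — while no small loop is ever formed, the local branches escaping to the neighbouring divisors; thus no vanishing family arises. Each nonzero zero of a $g_i$ produces exactly two cleaving families, one for each side of the limit value, whence $\Gamma_\infty^+(f)$ contributes $2R^+$ cleaving families and no vanishing family. In particular $\text{\rm cleav}(f)\ge 2R^+$.

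\emph{Bad faces and conclusion.} For a bad face $\varDelta(P;f)\in\Gamma_\infty^0(f)$ we have $d(P;f)=0$, so in the adapted chart $f=b_P(t)+u\,h(t,u)$ with $t=x^{|q|}y^{|p|}$ the coordinate along $D_P$. The Morse hypothesis gives $r^0(P;f)$ non-degenerate critical points $t_j$ of $b_P|_{\R\setminus\{0\}}$ with values $c_j=b_P(t_j)$, and near the point $p_j\in D_P$ over $t_j$ the extended function reads $f-c_j=h(t_j,0)\,u+\tfrac12 b_P''(t_j)(t-t_j)^2+\cdots$, an ordinary quadratic singularity. Hence for $c'$ near $c_j$ the curve $\overline{f^{-1}(c')}$ is, near $p_j$, a smooth conic arc; according to the sign of $b_P''(t_j)$ and the side of $c_j$ on which $c'$ lies, this arc either crosses $D_P$ in two points and the segment between them collapses onto $p_j$ (a cleaving family) or stays on one side of $D_P$ and, together with the adjacent part of the fiber, bounds a small loop collapsing onto $p_j$ (a vanishing family). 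Either way $t_j$ yields exactly two families, so the bad faces contribute altogether $2R^0$ families, among which the vanishing ones number some integer in $[0,2R^0]$. Adding the two contributions gives $\text{\rm cleav}(f)+\text{\rm vanish}(f)=2R^++2R^0$ and $0\le\text{\rm vanish}(f)\le 2R^0$; and when there is no bad face, $R^0=0$ forces $\text{\rm vanish}(f)=0$.

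\emph{Where the difficulty lies.} The overall architecture is bookkeeping, but two steps demand genuine care. First, turning the correspondence between the Section~2 notion of cleaving/vanishing family and the local degenerations above into a bijection: one must verify that distinct distinguished points of $D_\infty$ give inequivalent families and that each family is counted exactly once — and this is precisely where the hypothesis that $f$ has only isolated singularities enters, guaranteeing that the generic fiber has a stable topological type and that the sole instabilities at infinity are the transverse and quadratic ones above. Second — and I expect this to be the real obstacle — one must correctly produce the factor $2$ at each distinguished point: match the two real branches of the curve over a nonzero zero of $g_i$ (resp. over a critical value of $b_P$), equivalently the two one-sided limits, to two honest families, and read off the cleaving-versus-vanishing alternative at a bad-face critical point exactly from the local signs, the two families at a given $t_j$ possibly being of different types. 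Keeping track of all the relevant sign and parity data (the parity of $d(P;f)$, the parities of $|p|$ and $|q|$, the sign of $b_P''$) is the technical core of the proof.
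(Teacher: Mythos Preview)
Your treatment of the bad-face contribution is essentially the paper's argument: at each nondegenerate critical point $t_j$ of $b_P$ the extended function has a quadratic normal form on $E(R_i)$, and the three local pictures in Figures~5 and~6 each contribute exactly two families, giving $2R^0$ in total with the cleaving/vanishing split undetermined. This part is fine.

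The gap is in your $\Gamma_\infty^+(f)$ count. You assert that ``each nonzero zero of a $g_i$ produces exactly two cleaving families'' via a linearisation at that zero, but a cleaving family is not a local object attached to a single intersection point of $V_F$ with $E(R_i)$: by Definition~\ref{dfn2.4} it is a proper arc in $N$ whose limit $\delta$ is an arc in $\cup_i E(R_i)$, typically running between \emph{two} consecutive intersection points of $V_f$ with $E^+$ (cf.\ Figure~\ref{fig3}, where the cleaving arc stretches from $V_1$ all the way to $\gamma$). A purely local linearisation at one zero of $g_i$ tells you only that the nearby fiber ``peels off'' there; it does not by itself produce an arc with both endpoints on $\partial N$. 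Moreover, you do not account for the extra intersection point $V_1\cap E(R_m)$ (resp.\ $V_2\cap E(R_3)$), which is present whenever $x\mid f$ (resp.\ $y\mid f$) and is essential to the count.

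The paper's argument for $N^+$ is genuinely global. One splits $E^+=E_x^+\cup E_y^+$, observes that $V_f$ meets $E_x^+$ in exactly $R_x^++1$ points (the $R_x^+$ zeros of the $g_i$ \emph{plus} the transversal point $V_1\cap E(R_m)$), and that near the opposite end of the chain the fibers $f=\pm\varepsilon$ escape in four branches onto the chart $U_{k-1}$ adjacent to the first divisor with $d\leq 0$ (Figure~\ref{fig4}). This yields $4(R_x^++1)+4=4R_x^++8$ endpoints of $\{f=\varepsilon\}\cup\{f=-\varepsilon\}$ in $N_x^+$, hence $2R_x^++4$ arcs; one then checks that exactly four of these arcs have an endpoint on $U_{k-1}$ (and no arc has both endpoints there), so they are not cleaving. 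The remaining $2R_x^+$ arcs are the cleaving families. The factor $2$ you are looking for is therefore not produced locally at each zero but emerges from this global bookkeeping, in which the $+1$ from $V_1$ and the $-4$ from the escape at $E(R_{k-1})$ cancel against each other. Without this, your argument does not establish the equality $\text{cleav}(f)+\text{vanish}(f)=2(R^++R^0)$.
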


Note that $r^+(P;f)\leq\ell(P;f)$ and $r^0(P;f)\leq \ell(P;\tilde f)$, where 
$\ell(P;f)$ is the number of lattice points on $\varDelta(P;f)$ minus $1$ and 
$\tilde f(x,y)=f(x,y)-f(0,0)$.

Even if $f$ does not satisfy the assumptions in Theorem~\ref{thm1.1},
by applying toric modifications successively, we can obtain an upper bound of 
the number of elements in $B_f$.
For each face $\varDelta(R_i;f)\in\Gamma_\infty^-(f)$, 
set $\mu(R_i;f)=\sum_{j=1}^\eta(\mu_j-1)$, where $\mu_1,\ldots,\mu_\eta$ are the multiplicities
of the non-zero real roots $s_1,\ldots,s_\eta$ of $g_i(v_i)=0$ in~\eqref{eq2.1} below.
Note that $\mu(R_i;f)\leq\ell(R_i;f)$.
Let $R^+$ and $R^0$ be the integers defined in Theorem~\ref{thm4.1}.
Let $|B_f|$ and $|\Sigma_f|$ denote the numbers of elements in $B_f$ and $\Sigma_f$, respectively.

\begin{thm}\label{thm5.1}
The following inequality holds:
\[
   |B_f|\leq |\Sigma_f|+\epsilon+R^0+\sum_{\varDelta(P;f)\in\Gamma_\infty^-(f)}\mu(P;f),
\]
where $\epsilon=0$ if $R^+=0$ and $\epsilon=1$ if $R^+>0$.
\end{thm}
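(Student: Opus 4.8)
The plan is to reduce the general case to the non-degenerate situation of Theorem \ref{thm1.1} by applying toric modifications successively at the singular points at infinity, and to track how the relevant numerical data ($R^+$, $R^0$, and the $\mu(P;f)$) behave under each modification. First I would fix a toric compactification $X$ of $\R^2$ associated with a refinement of the dual fan of $\varDelta(f)$, so that the closure of a generic fiber $f^{-1}(c)$ meets the divisor at infinity $D = X \setminus \R^2$ transversally away from a finite set of points; these bad points lie on the toric strata corresponding to the faces in $\Gamma_\infty^+(f) \cup \Gamma_\infty^0(f) \cup \Gamma_\infty^-(f)$. An atypical value of $f$ can only be created at such a point, and by the usual argument (as in \cite{ishikawa}) the contribution of a point lying over a face $\varDelta(P;f) \in \Gamma_\infty^-(f)$ is governed by the non-zero real roots $s_1, \dots, s_\eta$ of the face equation $g_i(v_i) = 0$ from \eqref{eq2.1}, together with their multiplicities $\mu_1, \dots, \mu_\eta$.

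The key step is the inductive one: at a singular point at infinity sitting over a face $\varDelta(P;f) \in \Gamma_\infty^-(f)$, I would perform a toric modification (blow-up along the ideal generated by the local coordinates) to replace $f$ near that point by a new function whose local Newton data is strictly simpler. Each such modification either resolves the point — contributing at most one new candidate atypical value — or produces finitely many new points at infinity whose total ``$\mu$-complexity'' is strictly smaller than $\mu(P;f)$, since the sum $\sum (\mu_j - 1)$ over the new face equations drops at each step (this is the standard decrease-of-invariant argument for resolution of plane curve singularities, applied fiberwise). Summing the at-most-one new value per resolved point over the whole modification tree, the total number of atypical values produced over all faces in $\Gamma_\infty^-(f)$ is bounded by $\sum_{\varDelta(P;f)\in\Gamma_\infty^-(f)}\mu(P;f)$. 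The faces in $\Gamma_\infty^+(f)$ and $\Gamma_\infty^0(f)$ are left untouched: by the analysis behind Theorem \ref{thm1.1} (parts (ii) and (iii)), the positive faces jointly contribute at most the single value $f(0,0)$ — hence the term $\epsilon$, which is $0$ exactly when $R^+ = 0$ — and each bad face contributes at most $r^0(P;f)$ values, namely the critical values of $b_P|_{\R\setminus\{0\}}$, giving the term $R^0$. Adding $|\Sigma_f|$ for the affine critical values yields the claimed inequality.

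The main obstacle is controlling the interaction between the modifications and the \emph{boundary} behaviour of the fiber: after a toric modification over a $\Gamma_\infty^-$-face, the new exceptional divisors may themselves carry faces belonging to the new $\Gamma_\infty^+$ or $\Gamma_\infty^0$, and one must check that these do \emph{not} produce additional atypical values beyond the $\epsilon + R^0$ already counted from the original polygon. Concretely, one must verify that a positive or bad face appearing only after a modification over a negative face either corresponds to a value already in $\Sigma_f$ or is absorbed into the count coming from the resolved negative face; this requires a careful local comparison of the boundary functions before and after the blow-up, using that the modification is an isomorphism over $\R^2$ and that the value $f(0,0)$ is intrinsic. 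The remaining steps — that the $\mu$-invariant strictly decreases, that $\mu(P;f) \le \ell(P;f)$, and that only finitely many modifications are needed — are routine consequences of the combinatorics of Newton polygons and the classical resolution algorithm, so I would state them and refer to \cite{ishikawa} rather than reproduce the computations.
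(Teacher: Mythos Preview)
Your overall strategy---successive toric modifications at the degenerate points on the $\Gamma_\infty^-$-divisors, tracking a decreasing numerical invariant---is the same as the paper's. However, the bookkeeping you sketch does not close, and the gap is exactly the ``main obstacle'' you flag but do not resolve.

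The dichotomy ``either the modification resolves the point (contributing at most one candidate value) or the $\mu$-complexity strictly drops'' is too coarse. After modifying at a root of multiplicity $\mu_\xi$, the local Newton polygon $\varDelta^{\text{loc}}(f^{\tau+1})$ in general has \emph{all three} types of faces: new negative faces (carrying the residual $\mu$-complexity), new bad faces (each contributing up to $r^0$ candidate values, not one), and possibly a new positive face (contributing an $\epsilon_{\tau+1}\in\{0,1\}$). These must all be charged against the single budget $\mu_\xi-1$. The paper does this via the heights $\ell^+,\ell^0,\ell^-$ of $\varDelta^{\text{loc}}(f^{\tau+1})$: one has $\ell^++\ell^0+\ell^-\le\mu_\xi$, and a short case analysis shows that the total new contribution is at most $\ell^++\ell^0+\ell^--1\le\mu_\xi-1$. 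Your sketch omits this inequality and its proof.

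The delicate case is $\ell^-=0$, $\ell^+=1$: here a positive face genuinely appears after the modification, and if it contributed $1$ the bound would overshoot by one. The paper shows that in this situation the positive face is a \emph{stable boundary face}: the fibration near the corresponding divisor is trivial (the fiber is pinned to the adjacent negative divisor), so $\epsilon_{\tau+1}=0$. This is not a formality---it is a geometric fact about the fibers, and without it the inductive step $\Lambda_{\tau+1}\le\Lambda_\tau$ fails. Your proposal neither states this nor indicates why the new positive face should be harmless, so as written the induction does not go through.
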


A similar result for complex polynomial functions had been obtained in~\cite{lo}, 
see also~\cite[Corollary~6.6]{ishikawa}.

The paper is organized as follows. In Section~2, we introduce the definition of an
admissible toric compactification with respect to primitive covectors, and
give the definitions of cleaving and vanishing families and their equivalence relations.
In the subsequent three sections, 
we give the proofs of Theorem~\ref{thm1.1}, Theorem~\ref{thm4.1} and Theorem~\ref{thm5.1}.
Two examples are given in the end of Section~3. The definition of an admissible toric modification
is given in the beginning of Section~5, before giving the proof of Theorem~\ref{thm5.1}.

\section{Preliminaries}

\subsection{Toric compactification}

We first recall some definitions given in \cite{Kouchnirenko1976} which will be used in this work.
Set $f(x,y)=\sum_{(m,n)} a_{m,n}x^my^n$, where $m,n\geq 0$.
A boundary function $f_P(x,y)$ is said to be {\it non-degenerate} if the system of equations
$\frac{\partial f_P}{\partial x}=\frac{\partial f_P}{\partial y}=0$ has no solutions in $(\R\setminus\{0\})^2$.
Otherwise it is said to be {\it degenerate}.
The polynomial $f$ is called {\it convenient} if $\varDelta(f)$ intersects both positive axes.

Let $\Gamma_\infty^+(f)$ (resp. $\Gamma_\infty^0(f)$, $\Gamma_\infty^-(f)$) denote
the set of faces $\varDelta(P;f)$ of $f$ such that 
$P={}^t(p,q)$ satisfies either $p<0$ or $q<0$ and 
satisfies $d(P;f)>0$ (resp. $d(P;f)=0$, $d(P;f)<0$).
For a set $\Gamma(f)$ of faces of $\varDelta(f)$,
we say that $f$ is {\it non-degenerate} on $\Gamma(f)$
if $f_P$ is non-degenerate for any face $\varDelta(P;f)$ in $\Gamma(f)$.
Note that the non-degeneracy condition in~\cite{Kouchnirenko1976} corresponds to 
the non-degeneracy on $\Gamma_\infty^-(f)$ in this paper.

Let $f:\R^2\to\R$ be a polynomial function.
We give the definition of an admissible toric compactification
with respect to the Newton polygon $\varDelta(f)$.
Let $Q_i={}^t(p_i,q_i)$, $i=1,2,\ldots,n$, be primitive covectors which satisfy the following:
\begin{itemize}
\item[(1)] either $p_i$ or $q_i$ is negative;
\item[(2)] $\varDelta(Q_i;f)$ is a face of $\varDelta(f)$; and
\item[(3)] the indices are assigned in the counter-clockwise orientation.
\end{itemize}
Let $R_i={}^t(r_i,s_i)$, $i=1,2,\ldots,m$, be primitive covectors which satisfy the following:
\begin{itemize}
\item[(1)] $R_1={}^t(1,0)$, $R_2={}^t(0,1)$;
\item[(2)] either $r_i$ or $s_i$ is negative for each $R_i$, $i=3,\ldots,m$;
\item[(3)] $\{Q_i\}$ is contained in $\{R_3,\ldots,R_m\}$;
\item[(4)] the indices are assigned in the counter-clockwise orientation; and
\item[(5)] the determinants of the matrices $(R_i,R_{i+1})$, $i=1,\ldots,m-1$, and $(R_m,R_1)$ are $1$.
\end{itemize}
For convenience, we set $R_{m+1}=R_1$.
For each Cone$(R_i,R_{i+1})$, $i=2,\ldots,m$, an affine coordinate chart $(u_i,v_i)\in\R^2$ is
defined by the coordinate transformation
\[
x=u_i^{r_i}v_i^{r_{i+1}},\quad y=u_i^{s_i}v_i^{s_{i+1}}.
\]
Then a smooth toric variety $X$ is obtained by gluing these coordinate charts,
which is described as
\[
X=(\R^*)^2\cup\left(\bigcup_{i=1}^mE(R_i)\right)=\R^2\cup\left(\bigcup_{i=3}^mE(R_i)\right),
\]
where $E(R_i)$ is the exceptional divisor corresponding to the covector $R_i$.
The real variety $X$ is called the {\it admissible toric compactification} of $\R^2$
associated with $\{R_1,\ldots,R_m\}$.

Let $U_i$ denote the local chart with coordinates $(u_i,v_i)$ 
corresponding to Cone$(R_i,R_{i+1})$ for $i=2,\ldots,m.$
On $U_i$, the function $f$ has the form
\begin{equation}\label{eq2.1}
f(u_i,v_i)=u_i^{d(R_i;f)}v_i^{d(R_{i+1};f)}(g_i(v_i)+u_ih_i(u_i,v_i)),
\end{equation}
where $g_i$ is a polynomial of one variable $v_i$ and
$h_i$ is a polynomial of two variables $(u_i,v_i)$. The divisor $E(R_i)$ in this chart is given by $u_i=0$.

For an algebraic curve $C$ in $\R^2$, its closure in $X$ is called the {\it strict transform} of $C$.
Set $f(x,y)=x^\alpha y^\beta F(x,y)$, where $\alpha$ (resp. $\beta$) is a non-negative integer
such that $x$ (resp. $y$) does not divide $F$.
Let $V_f$, $V_F$, $V_1$ and $V_2$ denote the strict transforms of $f(x,y)=0$,
$F(x,y)=0$, $x=0$ and $y=0$ in $X$, respectively. 
In particular, $V_f=V_F\cup V_1\cup V_2$.
Note that $V_1$ (resp. $V_2$) is empty if $\alpha$ (resp. $\beta$) is $0$.

\begin{lemma}\label{lem2.1}
\begin{itemize}
\item[(1)] For $i=3,\ldots,m$, $V_F$ does not intersect $E(R_i)$ unless $R_i\in\{Q_1,\ldots,Q_n\}$.
\item[(2)] For $i=3,\ldots,m-1$, $V_F$ does not intersect $E(R_i)\cap E(R_{i+1})$.
\item[(3)] If $\alpha\geq 1$ (resp. $\beta\geq 1$) then 
$V_1$ (resp. $V_2$) intersects $E(R_m)$ (resp. $E(R_3)$) transversely.
\end{itemize}
\end{lemma}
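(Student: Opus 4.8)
The plan is to work chart by chart using the local form~\eqref{eq2.1} of $f$ on each $U_i$, together with the combinatorial relationship between the covectors $R_i$ and the faces of $\varDelta(f)$. For part (1), fix $i\in\{3,\ldots,m\}$ and work in the chart $U_i$ or $U_{i-1}$ in which the divisor $E(R_i)$ appears as a coordinate hyperplane. On $U_i$, write $f(u_i,v_i)=u_i^{d(R_i;f)}v_i^{d(R_{i+1};f)}(g_i(v_i)+u_ih_i(u_i,v_i))$; the strict transform $V_F$ is the closure of $\{F=0\}$, and since $F$ has no factors of $x$ or $y$, on $U_i$ the defining equation of $V_F$ is obtained from $f(u_i,v_i)$ by removing the monomial factor $u_i^{d(R_i;f)}v_i^{d(R_{i+1};f)}$, i.e. $V_F\cap U_i=\{g_i(v_i)+u_ih_i(u_i,v_i)=0\}$ up to the remaining powers of $v_i$ coming from $V_2$. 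Thus $V_F$ meets $E(R_i)=\{u_i=0\}$ exactly at the non-zero roots $v_i=s$ of $g_i(v_i)=0$. The key observation is that $g_i$ is precisely (a coordinate expression of) the restriction of $f$ to the face $\varDelta(R_i;f)$: if $\varDelta(R_i;f)$ is a single vertex, then $g_i$ is a monomial in $v_i$, which has no non-zero roots, so $V_F$ misses $E(R_i)$; if $\varDelta(R_i;f)$ is a genuine edge, then $R_i$ is one of the $Q_j$ by the definition of the $Q$'s as exactly the covectors whose associated face is an edge with $p_j$ or $q_j$ negative. Hence if $R_i\notin\{Q_1,\ldots,Q_n\}$ the face $\varDelta(R_i;f)$ is a vertex and $V_F\cap E(R_i)=\emptyset$.

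For part (2), note that $E(R_i)\cap E(R_{i+1})$ is the single point given in the chart $U_i$ by $u_i=v_i=0$ (this is where the cone $\mathrm{Cone}(R_i,R_{i+1})$ degenerates). From the local form, $V_F\cap U_i=\{g_i(v_i)+u_ih_i(u_i,v_i)=0\}$ passes through $(0,0)$ if and only if $g_i(0)=0$. But $g_i(0)$ is the coefficient of the monomial of $f$ lying at the vertex $\varDelta(R_i;f)\cap\varDelta(R_{i+1};f)$ of $\varDelta(f)$ — the unique lattice point minimizing both linear functionals $R_i$ and $R_{i+1}$. Since $3\le i\le m-1$, one checks that this vertex lies on neither coordinate axis in the relevant range (this uses that $R_1={}^t(1,0)$ and $R_2={}^t(0,1)$ and that the indices increase counter-clockwise, so the axis-supported vertices only appear at the two ends $i=2$ and $i=m$), so the corresponding coefficient of $f$ is non-zero, giving $g_i(0)\ne 0$ and $(0,0)\notin V_F$.

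For part (3), consider the chart $U_m$ corresponding to $\mathrm{Cone}(R_m,R_1)$ with $R_1={}^t(1,0)$; here the coordinate change has $x=u_m^{r_m}v_m$, $y=u_m^{s_m}$, so that $\{x=0\}$ pulls back (after removing the exceptional factor) to a curve cutting $E(R_m)$ transversely at a single point, and the power $\alpha$ governs whether $V_1$ is present. Concretely, substituting and factoring out the appropriate monomial shows $V_1\cap U_m=\{v_m=0\}$ meets $E(R_m)=\{u_m=0\}$ transversely at the origin of $U_m$; the hypothesis $\alpha\ge1$ is exactly what guarantees $V_1$ is non-empty. The statement for $V_2$ and $E(R_3)$ is symmetric, using the chart adjacent to $R_2={}^t(0,1)$.

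I expect the main obstacle to be bookkeeping: correctly identifying in each chart which exceptional divisor is $\{u_i=0\}$ versus $\{v_i=0\}$, matching $g_i(v_i)$ with the face function $f_{R_i}$ under the coordinate change in~\eqref{eq2.1}, and verifying that for the middle indices $3\le i\le m-1$ the relevant vertex of $\varDelta(f)$ avoids both axes so that its coefficient survives. None of these is deep, but they must be set up consistently with the orientation and unimodularity conventions (5) on the $R_i$.
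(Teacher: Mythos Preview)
Your approach is exactly the standard toric argument the paper has in mind; the paper itself simply cites Oka's book for (1) and (2) and only writes out the chart computation for (3), which matches your treatment of (3) essentially verbatim.

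One small correction to your reasoning for (2): your conclusion $g_i(0)\ne 0$ is right, but your justification is off. You argue that the vertex $\varDelta(R_i;f)\cap\varDelta(R_{i+1};f)$ lies on neither coordinate axis for $3\le i\le m-1$, and that this is why its coefficient is non-zero. Neither half of this is correct. First, that vertex can perfectly well lie on an axis (think of a convenient $f$ whose Newton polygon has several outer edges; the vertex on, say, the $x$-axis is shared by two consecutive $Q_j$'s, both with a negative component, hence both among $R_3,\ldots,R_m$). Second, and more importantly, axis-avoidance is irrelevant: the coefficient at \emph{any} vertex of $\varDelta(f)$ is non-zero simply because $\varDelta(f)$ is defined as the convex hull of the lattice points $(m,n)$ with $a_{m,n}\ne 0$. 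So the correct one-line justification is: $g_i(0)$ equals the coefficient of $f$ at the vertex of $\varDelta(f)$ minimizing both $R_i$ and $R_{i+1}$, and vertices of $\varDelta(f)$ have non-zero coefficients by definition. With that fix, your argument for (1)--(3) is complete.
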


\begin{proof}
All the assertions in this lemma are well-known.
For instance, the explanation in~\cite{oka} restricted to the two variable case 
works for real polynomial maps also.
We only check the assertion~(3) to confirm the usage of indices.
The curves $V_1$ and $E(R_m)$ are given on $U_m$ as
$\{(u_m,v_m)\in U_m\mid v_m=0\}$ and $\{(u_m,v_m)\in U_m\mid u_m=0\}$,
respectively. Hence they intersect transversely.
Similarly, $V_2$ and $E(R_3)$ are given on $U_2$ as
$\{(u_2,v_2)\in U_2\mid u_2=0\}$ and $\{(u_2,v_2)\in U_2\mid v_2=0\}$,
respectively. Hence they intersect transversely.
\end{proof}

\begin{lemma}\label{lem2.2}
Let $i$ be an index in $\{3,\ldots,m\}$. Suppose that $R_i$ satisfies one of the following:
\begin{itemize}
\item[(i)] $\varDelta(R_i;f)$ is not a bad face and $f_{R_i}$ is non-degenerate.
\item[(ii)] $\varDelta(R_i;f)$ is a bad face and $b_{R_i}(t)=0$ 
in~\eqref{eq1.1} has no non-zero real multiple root.
\end{itemize}
Then there is a one-to-one correspondence between the intersection points of $E(R_i)$ and $V_F$ 
and the non-zero real roots of $g_i(v_i)=0$. Moreover, they intersect transversely at these points.
\end{lemma}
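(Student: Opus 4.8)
My plan is to argue entirely inside the single affine chart $U_i$, in which $E(R_i)=\{u_i=0\}$ and $V_F$ have explicit defining equations, and to reduce both assertions of the lemma to an elementary statement about the non-zero real roots of the one-variable polynomial $g_i$ occurring in~\eqref{eq2.1}.

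The first step is to pin down the local equation of $V_F$ on $U_i$. Writing $f=x^\alpha y^\beta F$ and substituting $x=u_i^{r_i}v_i^{r_{i+1}}$, $y=u_i^{s_i}v_i^{s_{i+1}}$, one gets $f(u_i,v_i)=u_i^{A}v_i^{B}\,(F\circ\varphi_i)$, where $\varphi_i$ denotes the monomial substitution, $A=\alpha r_i+\beta s_i$ and $B=\alpha r_{i+1}+\beta s_{i+1}$. Pulling out of $F\circ\varphi_i$ the largest monomial $u_i^{a}v_i^{b}$ (with $a,b\in\Z$), the remaining polynomial $\widehat F_i$ is divisible by neither $u_i$ nor $v_i$, and $V_F\cap U_i=\{\widehat F_i=0\}$ by definition of the strict transform. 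Since $g_i\not\equiv 0$ (clear from the Newton polygon), the factor $g_i(v_i)+u_ih_i(u_i,v_i)$ of~\eqref{eq2.1} is not divisible by $u_i$; comparing the two factorizations $f=u_i^{A+a}v_i^{B+b}\widehat F_i$ and $f=u_i^{d(R_i;f)}v_i^{d(R_{i+1};f)}(g_i+u_ih_i)$ and using unique factorization in $\R[u_i^{\pm},v_i^{\pm}]$ gives $A+a=d(R_i;f)$ and $g_i(v_i)+u_ih_i(u_i,v_i)=c\,v_i^{e}\widehat F_i(u_i,v_i)$ for some non-zero constant $c$ and integer $e\ge 0$; setting $u_i=0$ yields $g_i(v_i)=c\,v_i^{e}\widehat F_i(0,v_i)$. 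Hence $\widehat F_i(0,\cdot)$ and $g_i$ have the same non-zero roots, with the same multiplicities. Consequently $E(R_i)\cap V_F$ meets $U_i$ exactly in the points $(0,c)$ with $g_i(c)=0$ and $c\ne 0$, possibly together with the corner $(0,0)=E(R_i)\cap E(R_{i+1})$; the only point of $E(R_i)$ not visible in $U_i$ is the other corner $E(R_{i-1})\cap E(R_i)$. By Lemma~\ref{lem2.1}(1)--(2), together with the remark that the vertices of $\varDelta(R_i;f)$ adjacent to $E(R_{i\pm1})$ realize the minimal weight on $\varDelta(f)$ (so that $g_i(0)\ne 0$ and $V_F$ misses $(0,0)$), the curve $V_F$ passes through neither corner. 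This establishes the claimed one-to-one correspondence.

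For transversality, note that at a point $(0,c)$ with $c\ne 0$ the curve $V_F=\{\widehat F_i=0\}$ meets $E(R_i)=\{u_i=0\}$ transversally precisely when $\partial\widehat F_i/\partial v_i(0,c)\ne 0$, i.e.\ precisely when $c$ is a \emph{simple} root of $\widehat F_i(0,\cdot)$, equivalently of $g_i$. So the lemma reduces to the claim that, under (i) or (ii), $g_i(v_i)=0$ has no non-zero real multiple root. Under~(i): if $\varDelta(R_i;f)$ is a vertex then, by~\eqref{eq2.1}, $g_i$ is a monomial in $v_i$ and has no non-zero root at all; if $\varDelta(R_i;f)$ is a one-dimensional face, then ``not bad'' together with $R_i$ having a negative entry forces $d(R_i;f)\ne 0$, and for the pull-back $\widetilde f:=f_{R_i}\circ\varphi_i=u_i^{d(R_i;f)}v_i^{d(R_{i+1};f)}g_i(v_i)$ a direct computation of $\partial\widetilde f/\partial u_i$ and $\partial\widetilde f/\partial v_i$, using $d(R_i;f)\ne 0$, shows that $\widetilde f$ has a critical point at $(1,c)\in(\R\setminus\{0\})^2$ exactly when $g_i(c)=g_i'(c)=0$; since $\varphi_i$ maps $(\R\setminus\{0\})^2$ into itself with everywhere invertible differential, such a point would yield a critical point of $f_{R_i}$ in $(\R\setminus\{0\})^2$, contradicting non-degeneracy of $f_{R_i}$. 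Under~(ii): here $d(R_i;f)=0$ and $f_{R_i}(x,y)=b_{R_i}(t(x,y))$ with $t(x,y)=x^{|s_i|}y^{|r_i|}$; using $\det(R_i,R_{i+1})=1$ one checks that $t$ pulls back to $v_i^{\pm1}$ on $U_i$, whence~\eqref{eq2.1} gives $g_i(v_i)=v_i^{-d(R_{i+1};f)}b_{R_i}(v_i)$ or $g_i(v_i)=v_i^{-d(R_{i+1};f)}b_{R_i}^{*}(v_i)$, with $b_{R_i}^{*}$ the reciprocal polynomial of $b_{R_i}$; in both cases the non-zero roots of $g_i$ are, up to reciprocals, the non-zero roots of $b_{R_i}$, with the same multiplicities, so the hypothesis that $b_{R_i}=0$ has no non-zero real multiple root forbids non-zero multiple roots of $g_i$.

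The one genuinely delicate point is the bookkeeping in the first step: correctly identifying $V_F\cap U_i$ with $\{\widehat F_i=0\}$, extracting from unique factorization the relation $g_i(v_i)=c\,v_i^{e}\widehat F_i(0,v_i)$, and verifying that the two corner points of $E(R_i)$ really lie off $V_F$, where Lemma~\ref{lem2.1} must be combined with the minimal-weight remark. Once that is in place, transversality is immediate and the hypotheses (i) and (ii) enter only through the elementary claim that $g_i$ has simple non-zero roots.
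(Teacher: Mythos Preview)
Your proof is correct and follows essentially the same approach as the paper's: work in the chart $U_i$, match $E(R_i)\cap V_F$ with the non-zero real roots of $g_i$, and use simplicity of these roots to obtain transversality (and to rule out isolated points of $\{g_i+u_ih_i=0\}$ on $u_i=0$). The paper simply asserts that hypotheses~(i) and~(ii) force the non-zero roots of $g_i$ to be simple, whereas you supply the missing arguments---via the chain rule together with $d(R_i;f)\ne 0$ in case~(i), and via the explicit relation between $g_i$ and $b_{R_i}$ in case~(ii)---so your write-up is in fact more complete than the original.
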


\begin{proof}
The divisor $E(R_i)$ is given on $U_i$ as $\{(u_i,v_i)\in U_i\mid u_i=0\}$.
On the other hand, $V_F\cap U_i$ is the set 
$\{(u_i,v_i)\in U_i\mid g_i(v_i)+u_ih_i(u_i,v_i)=0\}$ 
with excluding isolated points on $u_i=0$.
Let $(0,s)$ be an intersection point of $u_i=0$ and $g_i(v_i)+u_ih_i(u_i,v_i)=0$,
where $s\in\R\setminus\{0\}$.
Since $s$ is a single root of $g_i(v_i)=0$ in both of cases (i) and (ii),
$\frac{\partial(g_i(v_i)+u_ih_i(u_i,v_i))}{\partial v_i}(0,s)\ne 0$.
Hence $g_i(v_i)+u_ih_i(u_i,v_i)=0$ is smooth at $(0,s)$, 
i.e, $(0,s)$ is not an isolated point, and $V_F$ intersects $u_i=0$ transversely
at $(0,s)$.
\end{proof}

\begin{rem}
The assumption of non-degeneracy of $f_{R_i}$ is necessary. For example
if $g_i(v_i)+u_ih_i(u_i,v_i)=(v_i-1)^2+u_i^2$ then the intersection point $(0,1)$ with $u_i=0$ is isolated.
\end{rem}

\subsection{Cleaving and vanishing at infinity}

Let $N$ be a small, compact tubular neighborhood of $\cup_{i=3}^mE(R_i)$ in $X$.

\begin{dfn}\label{dfn2.4}
A continuous family $\{(\gamma_t,\delta_t,c_t)\}_{t\in (0,1)}$ of 
triples of a proper arc $\gamma_t$ in $N\setminus\cup_{i=3}^mE(R_i)$
whose endpoints lie on the boundary $\partial N$,
a closed, connected subset $\delta_t\subset\gamma_t$, which is either a closed arc or a point, 
and a real number $c_t$ is called a {\it cleaving family} of $f$ if it satisfies the following:
\begin{itemize}
\item[(1)] $\gamma_t\subset f^{-1}(c_t)$; and
\item[(2)] $c:=\lim_{t\to\ 0}c_t$ satisfies $|c|<\infty$ 
and $\delta:=\lim_{t\to 0}\delta_t\subset \cup_{i=3}^mE(R_i)$.
\end{itemize}
If there exists a cleaving family with limit $f=c$, 
then we say that the curve $f=c$ is {\it cleaving at infinity}.
\end{dfn}

Note that the definition of a cleaving family depends on 
the compactification $X$ of $\R^2$, though 
the existence of a cleaving family and its value $c$ do not.
In this sense, the statement ``$f=c$ is cleaving at infinity'' does not depend on the choice of $X$.
This definition coincides with that in~\cite[p.30]{cp} if we state it without compactification.
Obviously, if $f=c$ is cleaving at infinity then $c\in B_f$.

\begin{dfn}\label{dfn2.5}
A continuous family $\{(C_t, c_t)\}_{t\in (0,1)}$ of 
pairs of a real number $c_t$ and a connected component $C_t$ of $f=c_t$ in $\R^2$ 
is called a {\it vanishing family} if it satisfies the following:
\begin{itemize}
\item[(1)] $C_t\subset N\setminus\cup_{i=3}^mE(R_i)$; and
\item[(2)] $c:=\lim_{t\to\ 0}c_t$ satisfies $|c|<\infty$ 
and $C:=\lim_{t\to 0}C_t\subset \cup_{i=3}^mE(R_i)$.
\end{itemize}
If there exists a vanishing family with limit $f=c$, then
we say that the curve $f=c$ is {\it vanishing at infinity}.
\end{dfn}

Note that the definition of a vanishing family
depends on the compactification $X$ of $\R^2$, though 
the existence of a vanishing family and its value $c$ do not.
In this sense, the statement ``$f=c$ is vanishing at infinity'' 
does not depend on the choice of $X$. 
In~\cite{tz}, the value $c\in B_f$ is characterized by the first betti numbers and Euler characteristics
of fibers and ``vanishing'' and ``splitting'' phenomena.
The definition of a vanishing family coincides with the ``vanishing'' in~\cite{tz}
if we state it without compactification.
Obviously, if $f=c$ is vanishing at infinity then $c\in B_f$.

\begin{lemma}[\cite{tz,cp}, see p.31 in~\cite{cp}]\label{lem2.6}
Suppose that $c\in B_f$. Then one of the following holds:
\begin{itemize}
\item[(i)] $c\in \Sigma_f$;
\item[(ii)] $f=c$ is cleaving at infinity;
\item[(iii)] $f=c$ is vanishing at infinity.
\end{itemize}
\end{lemma}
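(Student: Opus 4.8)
\emph{Proof proposal.}
The statement is quoted from~\cite{tz} and~\cite{cp}, so my plan is to recall the argument in the present set‑up rather than to reprove it from scratch, and I would argue by contraposition. Assume $c\notin\Sigma_f$ and that $f=c$ is neither cleaving nor vanishing at infinity; I want to show that $f$ is a locally trivial fibration over a neighborhood of $c$, hence $c\notin B_f$. Since $\Sigma_f$ is finite and $c\notin\Sigma_f$, after shrinking we may fix an interval $I\ni c$ with $I\cap\Sigma_f=\emptyset$, so $f|_{\R^2}$ has no critical point over $I$.

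Fix an admissible toric compactification $X$ of $\R^2$ and the compact tubular neighborhood $N$ of $\bigcup_{i=3}^mE(R_i)$. For $c'\in I$, split
$f^{-1}(c')=\bigl(f^{-1}(c')\cap(X\setminus\mathring N)\bigr)\cup\bigl(f^{-1}(c')\cap N\bigr)$.
The set $X\setminus\mathring N$ is compact and contained in $\R^2$, and $f$ has no critical point there for $c'\in I$; hence by Ehresmann's theorem $f\colon f^{-1}(I)\cap(X\setminus\mathring N)\to I$ is a trivial fibration, and the trivialization can be chosen to preserve the boundary $\partial N$. It then remains to trivialize the ``ends'' $f^{-1}(c')\cap N$ and glue the two trivializations along $\partial N$.

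For the part inside $N$: since there are no critical points, $f^{-1}(c')\cap N$ is a compact $1$‑manifold with boundary on $\partial N$, i.e.\ a disjoint union of finitely many arcs with endpoints on $\partial N$ together with finitely many circles. The hypothesis that $f=c$ is not cleaving says that no continuous family of sub‑arcs $\delta_{c'}\subset f^{-1}(c')\cap N$ can limit into $\bigcup_{i=3}^mE(R_i)$ as $c'\to c$; the hypothesis that $f=c$ is not vanishing says that no continuous family of circle components can do so. A limiting (curve‑selection type) argument then shows that, for $c'$ sufficiently close to $c$, the combinatorial data of $f^{-1}(c')\cap N$ — the number of arcs, the pairing of their endpoints on $\partial N$, and the number of circles — is constant, and that one can build a controlled vector field on $N$ tangent to $\partial N$ whose flow carries $f^{-1}(c)\cap N$ to $f^{-1}(c')\cap N$. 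Patching this flow with the Ehresmann trivialization on $X\setminus\mathring N$ along $\partial N$ produces a trivialization of $f$ over $I$, so $c\notin B_f$.

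The main obstacle is precisely this middle step: deducing from the \emph{absence} of cleaving and vanishing families that the ends inside $N$ vary in a locally trivial fashion. This requires a compactness/curve‑selection argument ruling out an arc or a component of $f^{-1}(c')$ escaping toward $\bigcup_{i=3}^mE(R_i)$ without being recorded as a cleaving or a vanishing family. This is exactly the content of the analyses in~\cite{tz} and~\cite[p.31]{cp}, to which I would refer for the remaining details.
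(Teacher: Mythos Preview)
The paper gives no proof of this lemma at all; it is stated as a citation to~\cite{tz} and~\cite{cp} and used as a black box thereafter. Your proposal, which sketches the contrapositive argument via a compact/non-compact splitting and then defers the delicate step to those same references, is therefore entirely consistent with the paper's treatment and is in fact more informative.

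One small imprecision worth fixing if you keep the sketch: the set $f^{-1}(c')\cap N$ is not literally a compact $1$-manifold, since $f$ does not extend to the divisors $E(R_i)$ and the fiber may accumulate on them. What is compact is the closure in $X$ of $f^{-1}(c')$ (the strict transform $V_{f-c'}$), and the relevant object inside $N$ is this closure minus finitely many points on $\bigcup_{i\ge3}E(R_i)$. The rest of your outline (Ehresmann on the compact part, control of ends ruled by the absence of cleaving/vanishing, gluing along $\partial N$) is the standard shape of the argument in~\cite{tz,cp}.
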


Since the definitions of these families depend on the choice of the compact neighborhoods $N$, 
the parameter $t$ and the subsets $\{\delta_t\}_{t\in(0,1)}$, we need to introduce an equivalence relation
to remove these ambiguities. The equivalence relation is defined as follows.

\begin{dfn}\label{dfn2.7}
\begin{itemize}
\item[(1)] 
Two cleaving families $\{(\gamma_t,\delta_t,c_t)\}_{t\in (0,1)}$ 
and $\{(\gamma'_t,\delta'_t,c'_t)\}_{t'\in (0,1)}$, defined in compact tubular neighborhoods $N$ 
and $N'$ of $\cup_{i=3}^mE(R_i)$ respectively,
are {\it equivalent} if there exists $\ve>0$ such that for any $s\in (0,\ve)$
there exists $s'\in (0,1)$ such that $c_s=c'_{s'}$ and
$\gamma_s\cap \gamma'_{s'}\ne\emptyset$.
\item[(2)]
Two vanishing families $\{(C_t,c_t)\}_{t\in (0,1)}$ 
and $\{(C'_t,c'_t)\}_{t'\in (0,1)}$, defined in compact tubular neighborhoods $N$ 
and $N'$ of $\cup_{i=3}^mE(R_i)$ respectively,
are {\it equivalent} if there exists $\ve>0$ such that for any $s\in (0,\ve)$
there exists $s'\in (0,1)$ such that $c_s=c'_{s'}$ and $C_s=C'_{s'}$.
\end{itemize}
\end{dfn}

Later, we will count the numbers of cleaving and vanishing families
up to these equivalence relations.

\section{Proof of Theorem~\ref{thm1.1} and examples}

Theorem~\ref{thm1.1} will follow from the next proposition.

\begin{prop}\label{prop3.1}
Suppose that $f(x,y)$ is non-degenerate 
on $\Gamma_\infty^+(f)\cup\Gamma_\infty^-(f)$ and that
$b_P(t)=0$ in~\eqref{eq1.1} 
has no non-zero real multiple root for any bad face $\varDelta(P;f)$.
Then, $0\in B_f$ if and only if either {\rm (i)}~$0\in \Sigma_f$ or
{\rm (ii)}~there exists $\varDelta(P;f)\in\Gamma_\infty^+(f)$
such that $f_P(x,y)=0$ has a solution in $(\R\setminus\{0\})^2$.
Moreover, if it is in case~{\rm (ii)} then $f=0$ is cleaving at infinity.
\end{prop}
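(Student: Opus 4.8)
The plan is to analyze the behavior of $f$ near the exceptional divisors of the admissible toric compactification $X$ associated with the covectors $\{R_1,\dots,R_m\}$, and to show that the only source of a potential atypical value at $0$ is the intersection of the strict transform $V_F$ of $F=0$ with the divisors $E(R_i)$ coming from faces in $\Gamma_\infty^+(f)$. First I would fix such a compactification $X$ and recall, via Lemma~\ref{lem2.6}, that if $0\in B_f$ and $0\notin\Sigma_f$, then $f=0$ is either cleaving or vanishing at infinity; in either case the limiting set lies on $\cup_{i=3}^m E(R_i)$ and is contained in $\overline{f^{-1}(0)}$, hence meets some $E(R_i)$ at a point of $V_f=V_F\cup V_1\cup V_2$. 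So I would run through the divisors $E(R_i)$, $i=3,\dots,m$, classifying them by the sign of $d(R_i;f)$, i.e.\ by whether $\varDelta(R_i;f)$ lies in $\Gamma_\infty^+$, $\Gamma_\infty^0$, or $\Gamma_\infty^-$, together with the two "axis" divisors behavior at $E(R_m)$ and $E(R_3)$.

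The key computational step uses the local form~\eqref{eq2.1}: on the chart $U_i$,
\[
f(u_i,v_i)=u_i^{d(R_i;f)}v_i^{d(R_{i+1};f)}\bigl(g_i(v_i)+u_i h_i(u_i,v_i)\bigr),
\]
with $E(R_i)=\{u_i=0\}$. When $d(R_i;f)<0$ (a face in $\Gamma_\infty^-$), the factor $u_i^{d(R_i;f)}$ blows up along $E(R_i)$, so $\overline{f^{-1}(0)}$ cannot accumulate on $E(R_i)$ except at intersections with the adjacent divisor $E(R_{i+1})$ or the axes; by Lemma~\ref{lem2.1}(1)--(2) the strict transform $V_F$ avoids such $E(R_i)$ and the triple points, so no cleaving/vanishing of $f=0$ can be created there. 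When $d(R_i;f)>0$ (a face in $\Gamma_\infty^+$), $f$ vanishes to positive order along $E(R_i)$, and by non-degeneracy of $f_{R_i}$ together with Lemma~\ref{lem2.2}, $V_F$ meets $E(R_i)$ transversely precisely at the points $(0,s)$ with $g_i(s)=0$, $s\neq0$; a local trivialization argument over a neighborhood of such a transverse point then either produces a genuine cleaving family with value $0$ (giving (ii) $\Rightarrow 0\in B_f$ and the "cleaving" conclusion) or, if no such nonzero root exists, shows $f$ is locally trivial there. When $d(R_i;f)=0$ (a bad face), I would use that $f_{R_i}=b_{R_i}(t(x,y))$ has no nonzero real multiple root: by Lemma~\ref{lem2.2}(ii), $V_F$ again meets $E(R_i)$ transversely at simple roots of $g_i$, and since $d(R_i;f)=0$ the value of $f$ along those branches tends to $b_{R_i}(s)$; but the relevant limiting value for a would-be atypical value at $0$ forces $b_{R_i}(s)=0$, and transversality plus the absence of multiple roots rules out cleaving or vanishing at $0$ arising from this divisor — so bad faces contribute nothing to $0\in B_f$ under the hypotheses. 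Finally, at $E(R_m)$ and $E(R_3)$ I would invoke Lemma~\ref{lem2.1}(3): $V_1$ and $V_2$ meet these divisors transversely, and the corresponding local model of $f$ is again trivial (these are the coordinate axes directions, where no atypical behavior occurs for the zero value beyond what is already recorded).

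Assembling these cases gives both directions. If none of (i), (ii) holds, then at every divisor $E(R_i)$ the fiber $f^{-1}(0)$ extends to $X$ either transversely (simple roots of $g_i$ from $\Gamma_\infty^+$, $\Gamma_\infty^0$, or axes) or not at all ($\Gamma_\infty^-$), so a tubular neighborhood $N$ of $\cup_{i=3}^mE(R_i)$ admits a fiber-preserving trivialization over a punctured neighborhood of $0$; combined with $0\notin\Sigma_f$ this shows $f$ is a locally trivial fibration over a neighborhood of $0$, i.e.\ $0\notin B_f$. Conversely, (i) $\Rightarrow 0\in B_f$ is immediate from $\Sigma_f\subset B_f$, and in case (ii) the transverse intersection point $(0,s)$ on $E(R_i)$ with $d(R_i;f)>0$ lets me build explicitly a cleaving family whose limit is $f=0$ (pushing an arc of $f^{-1}(c_t)$ toward $E(R_i)$ as $c_t\to0$ and taking $\delta_t$ to be the shrinking piece that limits onto the divisor), proving both $0\in B_f$ and the "cleaving at infinity" claim.

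The main obstacle I expect is the trivialization/no-cleaving-no-vanishing argument at the divisors with $d(R_i;f)=0$: there $f$ does not vanish identically nor blow up along $E(R_i)$, so one must carefully track the values $b_{R_i}(s)$ along each branch of $V_F$ and argue that the Morse/simple-root hypothesis on $b_{R_i}$ prevents a connected component of a nearby fiber from collapsing onto the divisor at the value $0$ — this is exactly the delicate interplay between $\Gamma_\infty^0$ faces and the constant $f(0,0)$, and it is where the "no nonzero real multiple root" assumption is used in an essential way. The rest of the case analysis is a routine, if somewhat lengthy, application of Lemmas~\ref{lem2.1} and~\ref{lem2.2} and standard local triviality of smooth fibrations near transverse intersections.
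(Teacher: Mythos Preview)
Your overall strategy --- work in the toric compactification and analyze each divisor $E(R_i)$ according to the sign of $d(R_i;f)$ --- is the paper's strategy too; the paper merely packages it as three lemmas according to whether $f$ is convenient (Lemma~\ref{lem3.2}), non-convenient with $f(0,0)\ne 0$ (Lemma~\ref{lem3.3}), or satisfies $x\mid f$ or $y\mid f$ (Lemma~\ref{lem3.4}). However, two of your cases are handled incorrectly.

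For $d(R_i;f)<0$ you assert that $V_F$ avoids $E(R_i)$, invoking Lemma~\ref{lem2.1}(1)--(2). That lemma only says $V_F$ avoids $E(R_i)$ when $R_i$ is \emph{not} one of the $Q_j$; for a genuine face in $\Gamma_\infty^-(f)$ the strict transform $V_F$ \emph{does} meet $E(R_i)$, at the nonzero real roots of $g_i$. The correct argument (Lemma~\ref{lem3.2}, Figure~\ref{fig1}) is that at such a transverse intersection every nearby fiber is an arc from $\partial N$ to that same point on $E(R_i)$, so the fibration is trivial there. More seriously, your treatment of $\Gamma_\infty^+(f)$ is too local. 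A cleaving family is a proper arc with \emph{both} endpoints on $\partial N$; a single transverse intersection of $V_F$ with some $E(R_{i_0})$ furnishes only one such endpoint. In the paper (Lemma~\ref{lem3.4}, Figure~\ref{fig3}) the cleaving family runs along the whole chain $E^+_x=\bigcup_{i\ge k}E(R_i)$ from $V_F\cap E(R_{i_0})$ to the axis intersection $V_1\cap E(R_m)$, which exists precisely because $\Gamma_\infty^+(f)\ne\emptyset$ forces $x\mid f$ or $y\mid f$. Dually, when (ii) fails one must show that the nearby fiber starting near $V_1$ does not return to $\partial N$ but instead terminates on or near $E(R_{k-1})$, where $d(R_{k-1};f)\le 0$ (Figure~\ref{fig4}); this is a global statement over $E^+_x$, not the local triviality at $V_1\cap E(R_m)$ that you invoke.
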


We divide the proof into three lemmas. 
Note that the proofs of the first two lemmas for complex polynomials
are written, for example, in~\cite{nz, son}, which are based on the Curve Selection Lemma at infinity,
and their arguments work in real case also.
We here give different proofs based on the toric compactification $X$.
Let $N$ denote a small, compact tubular neighborhood of $\cup_{i=3}^mE(R_i)$ in $X$.

\begin{lemma}\label{lem3.2}
Suppose that $f$ is convenient and non-degenerate on $\Gamma_\infty^-(f)$.
Then $0\in B_f$ if and only if $0\in \Sigma_f$.
\end{lemma}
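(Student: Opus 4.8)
The reverse implication is classical---a critical value always lies in the bifurcation set, so $\Sigma_f\subseteq B_f$---hence the plan is to prove the contrapositive of the other implication: assuming $0\notin\Sigma_f$, show $0\notin B_f$. The first step is a combinatorial reduction from convenience. Since $\varDelta(f)$ meets both positive coordinate axes, $f$ has a pure power $x^a$ and a pure power $y^b$ with $a,b\ge1$; in particular $x\nmid f$ and $y\nmid f$, so $f=F$ and $V_f=V_F$, and $\varDelta(f)$ contains $(a,0)$ and $(0,b)$. Now each $R_i={}^t(r_i,s_i)$ with $i\in\{3,\dots,m\}$ satisfies $r_i<0$ or $s_i<0$, so $d(R_i;f)\le r_ia<0$ in the first case and $d(R_i;f)\le s_ib<0$ in the second; therefore $d(R_i;f)<0$ for all $i=3,\dots,m$, while $d(R_1;f)=d(R_2;f)=0$. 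Consequently $\Gamma_\infty^+(f)=\Gamma_\infty^0(f)=\emptyset$, there are no bad faces, and whenever $\varDelta(R_i;f)$ is a face it lies in $\Gamma_\infty^-(f)$, on which $f$ is non-degenerate by hypothesis.

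The second step records the geometry of $\overline{f^{-1}(0)}=V_F$ on the compactification $X$. Since $d(R_i;f)<0$, formula \eqref{eq2.1} shows $|f|\to\infty$ as one approaches $E(R_i)=\{u_i=0\}$ off the locus $\{g_i(v_i)+u_ih_i=0\}$, so $V_F$ meets $\bigcup_{i=3}^mE(R_i)$ in a finite set. By Lemma \ref{lem2.1}, $V_F$ meets $E(R_i)$ only when $R_i$ is one of the covectors $Q_1,\dots,Q_n$, and avoids the double points $E(R_i)\cap E(R_{i+1})$ for $3\le i\le m-1$; by Lemma \ref{lem2.2} (applicable since there are no bad faces and $f_{R_i}$ is non-degenerate for each $R_i\in\{Q_1,\dots,Q_n\}$), away from the corners of the charts $V_F$ meets each $E(R_i)$ transversely at the finitely many smooth points in bijection with the nonzero real roots of $g_i$. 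The remaining corner points, where $V_F$ may in addition meet the closure of a coordinate axis, lie in the charts $U_2$ and $U_m$ and are treated by a direct local computation using $d(R_1;f)=d(R_2;f)=0$.

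The heart of the proof is the local model of $f$ at a point $p\in V_F\cap\bigcup_{i=3}^mE(R_i)$. At an interior point $p=(0,v_*)$ of $E(R_i)$, with $v_*$ a nonzero simple root of $g_i$, the function $w:=g_i(v_i)+u_ih_i$ is a local coordinate transverse to $u_i$, and \eqref{eq2.1} becomes $f=u_i^{d(R_i;f)}\eta(u_i,w)\,w$ with $\eta$ smooth and nonvanishing at $p$ and $d(R_i;f)\le-1$. On $N\setminus\bigcup_{i=3}^mE(R_i)$, that is for $u_i\ne0$, we have $\partial f/\partial w\ne0$, so $f$ is a submersion near $p$ whose fibres over a neighbourhood of $0$ are, on each of the half-neighbourhoods $\{u_i>0\}$ and $\{u_i<0\}$, a single smooth arc $\{w=\varphi_c(u_i)\}$ depending continuously on $c$ with $\varphi_c(u_i)\to0$ as $u_i\to0$; hence $f$ is a locally trivial fibration near $p$ over a neighbourhood of $0$, and the same holds at the corner points by the computation above. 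Therefore no cleaving family (Definition \ref{dfn2.4}) and no vanishing family (Definition \ref{dfn2.5}) of $f$ can have its limit set meeting $\bigcup_{i=3}^mE(R_i)$, and since $0\notin\Sigma_f$, Lemma \ref{lem2.6} gives $0\notin B_f$. I expect the main obstacle to be precisely this last step: turning the normal form $f=u_i^{d(R_i;f)}\eta w$ into the clean statements ``no cleaving family'' and ``no vanishing family''---equivalently, local triviality of $f$ along $\overline{f^{-1}(0)}\cap\bigcup_{i=3}^mE(R_i)$ reconciled with Definitions \ref{dfn2.4}--\ref{dfn2.5}---while keeping track of the two real branches $u_i\gtrless0$ and of the parity of $d(R_i;f)$; the analysis at the corner points, where $V_F$ may be tangent to an exceptional divisor or may also meet a coordinate axis, is the other delicate point, and is the place where convenience (via $d(R_1;f)=d(R_2;f)=0$) is genuinely used.
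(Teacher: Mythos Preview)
Your proposal is correct and follows essentially the same route as the paper: reduce to showing $0\notin B_f$ when $0\notin\Sigma_f$, use convenience to force $d(R_i;f)<0$ for all $i=3,\dots,m$ (hence no bad faces), invoke Lemmas~\ref{lem2.1} and~\ref{lem2.2} for transversality of $V_f=V_F$ along the exceptional locus, and then use the local form~\eqref{eq2.1} to exclude cleaving and vanishing via Lemma~\ref{lem2.6}. The paper argues the two exclusions slightly more asymmetrically---it rules out cleaving by the local-triviality picture near $V_f\cap E(R_i)$ (your normal form $f=u_i^{d(R_i;f)}\eta\,w$), and rules out vanishing by the cruder observation that $|f|\to\infty$ on approach to any point of $E(R_i)$ with $g_i\ne 0$---but you already have both ingredients.

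Your worry about the corner points in $U_2$ and $U_m$ is unnecessary. Convenience guarantees that $\varDelta(R_3;f)$ contains the rightmost lattice point $(a,0)$ of $\varDelta(f)$ on the $x$-axis (since $R_3$ is the first covector after $R_2={}^t(0,1)$), so $g_2(0)=a_{a,0}\ne 0$; the analogous statement holds in $U_m$. Thus $V_F$ never meets those corners, Lemma~\ref{lem2.1}(2) handles the remaining corners $E(R_i)\cap E(R_{i+1})$ for $3\le i\le m-1$, and no separate analysis is needed. The parity of $d(R_i;f)$ likewise plays no role: on each half $\{u_i>0\}$ or $\{u_i<0\}$ your normal form already gives a single smooth arc per fibre, which is all Definitions~\ref{dfn2.4}--\ref{dfn2.5} require.
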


\begin{proof}
Since $\Sigma_f\subset B_f$, it is enough to show that if $0\in B_f$ then $0\in \Sigma_f$. 
Assume that $0\not\in \Sigma_f$. 
By Lemma~\ref{lem2.6}, it is enough to check that $f=0$ is 
not cleaving and not vanishing at infinity.
Note that there is no bad face since $f$ is convenient, and
$V_f$ intersects $\cup_{i=3}^m E(R_i)$ transversely by Lemma~\ref{lem2.1} and ~\ref{lem2.2}.

We first prove that $f=0$ is not cleaving at infinity. 
Assume that $f=0$ is cleaving at infinity. 
Then $V_f$ must intersect $E(R_i)$ for some $i=3,\ldots,m$. 
Let $p$ be an intersection point of $V_f$ and $E(R_i)$.
Note that $p\in\delta$, where $\delta$ is the limit of closed, connected sets $\{\delta_t\}_{t\in (0,1)}$ 
in Definition~\ref{dfn2.4}.
Since $f$ is convenient, we have $d(R_i;f)<0$. 
Then any nearby fiber of $V_f$ in $N$ near $p$ is a simple arc connecting 
a point near $V_f\cap\partial N$ and the point $p$, see Figure~\ref{fig1}.
Hence the fibration at infinity near $p$ is trivial,
which contradicts the assumption that $f=0$ is cleaving at $p$.
\begin{figure}[htbp]
\centerline{\input{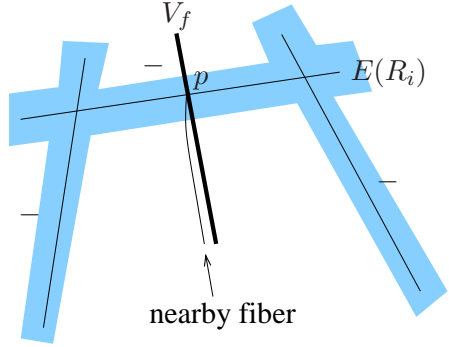}}
  \caption{The triviality of the fibration at infinity in the case $d(R_i;f)<0$. 
The sign $-$ means that $E(R_i)$ satisfies $d(R_i;f)<0$. The painted region is the neighborhood $N$.
\label{fig1}}
\end{figure}

Next we check that $f=0$ is not vanishing at infinity.
Since $d(R_i;f)<0$ for any $i=3,\ldots,m$, if a vanishing family exists then there exists a sequence $\{(t_j,s)\}_{j\in\N}$ of points on $U_i$
for some $i\in\{3,\ldots,m\}$ such that 
$\lim_{j\to\infty}t_j=0$, $\lim_{j\to\infty}f|_{U_i}(t_j,s)=0$ 
and $g_i(s)\ne 0$. However, from~\eqref{eq2.1}, 
we see that $|\lim_{j\to\infty}f|_{U_i}(t_j,s)|=\infty$,
which is a contradiction. Thus $0\not\in B_f$ by Lemma~\ref{lem2.6}.
\end{proof}

\begin{lemma}\label{lem3.3}
Suppose that $f(0,0)\ne 0$ and $f$ is not convenient.
Suppose further that $f$ is non-degenerate on $\Gamma_\infty^-(f)$
and $b_P(t)=0$ in~\eqref{eq1.1} 
has no non-zero real multiple root for any bad face $\varDelta(P;f)$.
Then $0\in B_f$ if and only if $0\in \Sigma_f$.
\end{lemma}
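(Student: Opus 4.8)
The plan is to follow the scheme of the proof of Lemma~\ref{lem3.2}, adding the treatment of bad faces. Since $\Sigma_f\subset B_f$, it suffices to prove that $0\notin\Sigma_f$ implies $0\notin B_f$, so I assume $0\notin\Sigma_f$; by Lemma~\ref{lem2.6} it is then enough to show that $f=0$ is neither cleaving nor vanishing at infinity. The first step is to record the structural consequences of the hypotheses. Since $f(0,0)\ne 0$, the origin lies in $\varDelta(f)$, so $d(P;f)\le 0$ for every primitive covector $P$; in particular $\Gamma_\infty^+(f)=\emptyset$ and $d(R_i;f)\le 0$ for all $i=3,\dots,m$. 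Moreover $x\nmid f$ and $y\nmid f$, hence $\alpha=\beta=0$ and $V_f=V_F$ in the notation of Section~2. For each bad face $\varDelta(R_i;f)$, reading the monomial $t$ of~\eqref{eq1.1} in the chart $U_i$ identifies $b_{R_i}$ with $g_i$ up to a substitution $v_i\mapsto v_i^{\pm 1}$ and a Laurent-monomial factor, so the assumption that $b_{R_i}(t)=0$ has no non-zero real multiple root yields that $g_i(v_i)=0$ has no non-zero real multiple root. Together with the non-degeneracy on $\Gamma_\infty^-(f)$, Lemmas~\ref{lem2.1} and~\ref{lem2.2} then show that $V_f$ meets $\bigcup_{i=3}^m E(R_i)$ transversely at finitely many points, none of them lying on a corner $E(R_i)\cap E(R_{i+1})$. (The hypothesis that $f$ is not convenient is used only to make the statement complementary to Lemma~\ref{lem3.2}; the argument itself uses merely $f(0,0)\ne 0$.)

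The next step is to classify the local behaviour of $f$ near the divisors at infinity. If $d(R_i;f)<0$, then $|f|\to\infty$ along the interior of $E(R_i)$, exactly as in Lemma~\ref{lem3.2}. If $d(R_i;f)=0$ and $R_i\notin\{Q_1,\dots,Q_n\}$, then, since $(0,0)\in\varDelta(R_i;f)$, the face must be the single vertex $(0,0)$, $g_i$ is a non-zero monomial, and $f$ restricted to the interior of $E(R_i)$ is a non-zero monomial, hence bounded away from $0$ on compact subsets. If $d(R_i;f)=0$ and $\varDelta(R_i;f)$ is a bad face, then $f|_{E(R_i)}$ is a non-zero Laurent-monomial multiple of $b_{R_i}$, so it vanishes precisely at the finitely many transverse points of $V_f\cap E(R_i)$. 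Finally, near any corner $E(R_i)\cap E(R_{i+1})$ one reads off from~\eqref{eq2.1} that either $|f|\to\infty$ (when $d(R_i;f)<0$ or $d(R_{i+1};f)<0$) or $f\to f(0,0)\ne 0$ (when $d(R_i;f)=d(R_{i+1};f)=0$, the corner then corresponding to the vertex $(0,0)$ of $\varDelta(f)$). Consequently, the only places where $f$ fails to be bounded away from $0$ near $\bigcup_{i=3}^m E(R_i)$ are small neighbourhoods of the transverse points of $V_f\cap E(R_i)$; and at such a point $p$, \eqref{eq2.1} together with the simplicity of the corresponding root of $g_i$ exhibits $f$ as a submersion up to a unit, so the restriction of $f$ to $N$ near $p$ is a trivial fibration.

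Granting this classification, the two main assertions follow quickly. If $f=0$ were cleaving, its limit set $\delta\subset\bigcup_{i=3}^m E(R_i)$ would have to contain a point $p$ where $f$ is not bounded away from $0$ (else the nearby fibres could not have values tending to $0$), hence a transverse point of $V_f\cap E(R_i)$; but the fibration of $f$ on $N$ near $p$ is trivial, so $\delta_t$ cannot degenerate onto $p$, contradicting the definition of a cleaving family, as in the proof of Lemma~\ref{lem3.2}. Similarly, a vanishing family would produce points of some $U_i$ tending to $E(R_i)$ with $f$-values tending to $0$, so the limiting component $C$ would lie where $f$ is not bounded away from $0$; by the classification $C$ is then a single transverse point $p$ of $V_f\cap E(R_i)$, near which the fibres $f=c$ are smooth arcs, so none of them has a compact component contracting to $p$, a contradiction. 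Hence, by Lemma~\ref{lem2.6}, $0\notin B_f$.

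The step I expect to be the main obstacle is the local analysis around the bad-face divisors and the bookkeeping near the corners: one must choose the tubular neighbourhood $N$ uniformly small, carefully establish the ``$f\to f(0,0)$'' regime at the corners where two bad faces---or a bad face and the vertex $(0,0)$---meet, deal with the extreme corners adjacent to $E(R_1)$ and $E(R_2)$, and check that the trivializations near the transverse points of $V_f\cap E(R_i)$ patch together so as to exclude a cleaving or vanishing family globally. The contributions of the divisors with $d(R_i;f)<0$ are handled verbatim as in Lemma~\ref{lem3.2}; the genuinely new input is the case $d(R_i;f)=0$, where the hypothesis on $b_{R_i}$ is used both to keep $V_f$ transverse to $E(R_i)$ and to prevent $0$ from being a critical value of $b_{R_i}|_{\R\setminus\{0\}}$.
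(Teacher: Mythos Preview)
Your proposal is correct and follows essentially the same approach as the paper: reduce via Lemma~\ref{lem2.6} to excluding cleaving and vanishing, then handle divisors with $d(R_i;f)<0$ exactly as in Lemma~\ref{lem3.2} and divisors with $d(R_i;f)=0$ using the simple-root hypothesis on $b_{R_i}$ through Lemma~\ref{lem2.2}. You are in fact more explicit than the paper in several places---recording that $f(0,0)\ne 0$ forces $(0,0)\in\varDelta(f)$ and hence $\Gamma_\infty^+(f)=\emptyset$, analysing the corner behaviour, and noting that the ``not convenient'' hypothesis is inessential---but the overall architecture and the key lemmas invoked are the same.
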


\begin{proof}
We assume $0\not\in\Sigma_f$ and prove $0\not\in B_f$.
By Lemma~\ref{lem2.6}, it is enough to check that $f=0$ is not cleaving and not vanishing at infinity.

First we show that $f=0$ is not cleaving at infinity.
Let $R_i$ be a covector such that $E(R_i)$ intersects $V_f$, where $i=3,\ldots,m$.
If $d(R_i;f)=0$ then the condition (ii) in Lemma~\ref{lem2.2} holds by the assumption.
By Lemma~\ref{lem2.2}, all nearby fibers of $f=0$ near $E(R_i)$ 
are transverse to $E(R_i)$, see Figure~\ref{fig2}.
Hence the fibration at the infinity is trivial.
The triviality also holds if $d(R_i;f)<0$ as we had seen in the proof of Lemma~\ref{lem3.2}.

\begin{figure}[htbp]
    \centerline{\input{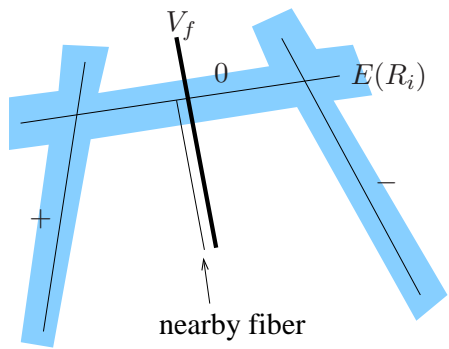}}
  \caption{The triviality of the fibration at infinity in the case $d(R_i;f)=0$.
The sign $0$ means that $E(R_i)$ satisfies $d(R_i;f)=0$.\label{fig2}}
\end{figure}

Next we check that $f=0$ is not vanishing at infinity.
If there is a vanishing family whose limit intersects $E(R_i)$ with $d(R_i;f)=0$
then, since $f(u_i,v_i)$ in~\eqref{eq2.1} has no factor $u_i^{-1}$, 
the limit in $U_i$ should be given by $f(u_i,v_i)=0$,
which this is nothing but $V_f\cap U_i$.
If $g_i(v_i)=0$ in ~\eqref{eq2.1} has a non-zero real solution then,
since it is not a multiple root, the limit cannot be contained in $E(R_i)$ with $d(R_i;f)=0$.
If $g_i(v_i)=0$ has no non-zero real solution then $V_f\cap E(R_i)\cap U_i=\emptyset$.
Hence, in either case, there is no vanishing family.
The limit cannot intersect $E(R_i)$ with $d(R_i;f)<0$ 
by the same reason as we had seen in the proof of Lemma~\ref{lem3.2}. This completes the proof.
\end{proof}

Finally, we study the case where either $x|f$ or $y|f$.

\begin{lemma}\label{lem3.4}
Suppose that $f$ is non-degenerate on $\Gamma_\infty^+(f) \cup\Gamma_\infty^-(f)$
and that $b_P(t)=0$ in~\eqref{eq1.1} has no non-zero real multiple root for any bad face $\varDelta(P;f)$.
Suppose further that either $x|f$ or $y|f$.
Then, $0\in B_f$ if and only if either {\rm (i)}~$0\in \Sigma_f$ or 
{\rm (ii)}~there exists $\varDelta(P;f)\in\Gamma_\infty^+(f)$
such that $f_P(x,y)=0$ has a solution in $(\R\setminus\{0\})^2$.
Moreover, if it is in case~{\rm (ii)} then $f=0$ is cleaving at infinity.
\end{lemma}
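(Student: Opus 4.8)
The plan is to mimic the structure of the proofs of Lemmas~\ref{lem3.2} and~\ref{lem3.3}, but now keeping track of the extra boundary divisors $V_1$ and/or $V_2$ that appear because $x\mid f$ or $y\mid f$. Write $f(x,y)=x^\alpha y^\beta F(x,y)$ with $\alpha\ge 1$ or $\beta\ge 1$, say $\alpha\ge 1$ for definiteness; then $V_f=V_F\cup V_1\cup V_2$. Since $\Sigma_f\subset B_f$, one direction and the ``$0\in\Sigma_f$'' alternative are free, so I would assume $0\notin\Sigma_f$ and analyze when $f=0$ can be cleaving or vanishing at infinity using Lemma~\ref{lem2.6}. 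The new feature compared to the previous lemmas is that $f=0$ always contains the coordinate axes in its strict transform, and by Lemma~\ref{lem2.1}(3) the divisor $V_1$ meets $E(R_m)$ transversely (and $V_2$ meets $E(R_3)$ transversely if $\beta\ge 1$). So the curve $V_f$ genuinely touches $\cup_{i=3}^m E(R_i)$, and the question is whether that contact produces atypical behavior.

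First I would treat cleaving. Near an intersection point $p$ of $V_f$ with $E(R_i)$ I would distinguish three cases according to the sign of $d(R_i;f)$. If $d(R_i;f)<0$ or $d(R_i;f)=0$, the same local pictures as in Lemmas~\ref{lem3.2} and~\ref{lem3.3} apply (using non-degeneracy on $\Gamma_\infty^-(f)$, the Morse/no-multiple-root hypothesis on bad faces, and Lemmas~\ref{lem2.1}, \ref{lem2.2}): the nearby fibers are either simple arcs running into $p$ or are transverse to $E(R_i)$, so the fibration at infinity is locally trivial and no cleaving family can have its limit there. The genuinely new case is $d(R_i;f)>0$, which can occur precisely along the faces in $\Gamma_\infty^+(f)$; here the relevant divisors are $E(R_m)$ (meeting $V_1$) and possibly the $E(Q_j)$ with $d(Q_j;f)>0$. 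On such a divisor the local equation~\eqref{eq2.1} has the form $u_i^{d(R_i;f)}v_i^{d(R_{i+1};f)}(g_i(v_i)+u_ih_i)$ with a positive power of $u_i$ factored out, so the fiber $f=c$ with $c\ne 0$ cannot approach $E(R_i)$ except along $v_i=0$ (i.e. along $V_1$ or $V_2$); meanwhile the zero fiber $V_F$ does meet $E(R_i)$ at the non-zero roots of $g_i$. I would show that if some such $g_i$ has a non-zero real root — equivalently, by the correspondence used throughout, if some $f_P=0$ with $\varDelta(P;f)\in\Gamma_\infty^+(f)$ has a solution in $(\R\setminus\{0\})^2$ — then one can build an explicit cleaving family: take arcs of $f=c_t$ with $c_t\to 0$ that, in the chart $U_i$, are graphs $u_i=u_i(v_i)$ passing near $(0,s)$ and with the sub-arc $\delta_t$ shrinking to the point $(0,s)\in E(R_i)$ as $t\to 0$; because $d(R_i;f)>0$, the value of $f$ on a neighborhood of $(0,s)$ in $E(R_i)$ is $0$, so this genuinely separates from the generic fiber and gives cleaving with limit $f=0$. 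Conversely, if no $g_i$ along $\Gamma_\infty^+(f)$ has a non-zero real root, then $V_F\cap E(R_i)\cap U_i=\emptyset$ for those $i$, and the only points of $V_f$ on $E(R_m)$ (resp. $E(R_3)$) are the transverse intersections with $V_1$ (resp. $V_2$) from Lemma~\ref{lem2.1}(3); near such a point the family of fibers is again locally trivial (a fiber $f=c$ for $c$ near $0$, $c\ne 0$, is a simple arc through the chart not touching the divisor), so no cleaving occurs.

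Next, vanishing: I would argue, exactly as in Lemma~\ref{lem3.3}, that a vanishing family forces a sequence of points on some $U_i$ with $u_i\to 0$ along which $f$ stays bounded while staying off $V_F$; if $d(R_i;f)<0$ this contradicts $|f|\to\infty$, if $d(R_i;f)=0$ the limiting curve would have to be $V_F\cap U_i$ itself (no $u_i^{-1}$ factor) and the no-multiple-root hypothesis rules out the component sitting inside $E(R_i)$, and if $d(R_i;f)>0$ then $f\to 0$ forces the limit onto $V_F$ again; in every case no vanishing family with finite limit exists. (Note that a connected component of a level set of $f$ near the axes extends to infinity along $V_1$ or $V_2$, so it is not a ``small loop'' and cannot vanish.) Combining the cleaving analysis with this, $0\in B_f$ iff $0\in\Sigma_f$ or case~(ii) holds, and in case~(ii) we exhibited a cleaving family, proving the ``moreover'' clause. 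The main obstacle I anticipate is the careful construction of the cleaving family in the $d(R_i;f)>0$ case — specifically, verifying that the arcs $\gamma_t$ can be chosen properly embedded in $N\setminus\cup E(R_i)$ with endpoints on $\partial N$ and with $\delta_t$ a connected sub-arc (or point) whose limit lies on the divisor — and, symmetrically, checking local triviality near the transverse points $V_1\cap E(R_m)$ and $V_2\cap E(R_3)$ when no non-zero root is present, since that is exactly the configuration that distinguishes this lemma from Lemmas~\ref{lem3.2}--\ref{lem3.3}.
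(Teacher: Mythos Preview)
Your overall architecture matches the paper's: reduce to Lemma~\ref{lem2.6}, reuse the arguments of Lemmas~\ref{lem3.2}--\ref{lem3.3} on the divisors with $d(R_i;f)\le 0$, and isolate the genuinely new analysis to the divisors with $d(R_i;f)>0$ together with the transverse points $V_1\cap E(R_m)$, $V_2\cap E(R_3)$. You also correctly flag, at the end, that this last piece is the real obstacle. The gap is that your proposed resolution of that obstacle is too local, and as written would not succeed.

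Concretely, a fiber $f=c_t$ with small $c_t\ne 0$ does not live in a single chart $U_i$ with $d(R_i;f)>0$: in such a chart $f=u_i^{d(R_i;f)}(\cdots)$, so the fiber runs off toward the adjacent corners $E(R_{i-1})\cap E(R_i)$ and $E(R_i)\cap E(R_{i+1})$ and continues along the whole connected chain of divisors with $d>0$. So your cleaving construction ``graphs $u_i=u_i(v_i)$ near $(0,s)$ with $\delta_t\to(0,s)$'' does not by itself produce a proper arc with both endpoints on $\partial N$; and your converse claim ``near $V_1\cap E(R_m)$ the fiber is a simple arc through the chart, hence locally trivial'' does not rule out that the arc, after leaving $U_m$, re-emerges on $\partial N$ elsewhere along the chain, which would be a cleaving. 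In both directions the missing ingredient is a global tracking of the fiber along the entire chain $E(R_k),\ldots,E(R_m)$ of divisors with $d>0$.

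The paper supplies exactly this. It first arranges indices so that $d(R_i;f)>0$ for $i\ge k$ and $d(R_{k-1};f)\le 0$. For case~(ii) it takes the branch $\gamma$ of $V_F$ meeting $E(R_{i_0})$ closest to the end of the positive chain, and observes that the nearby fiber in $N$ has one endpoint on $\partial N$ near $V_1$ and the other on $\partial N$ near $\gamma$, with its middle sliding onto $\bigcup_{i\ge i_0}E(R_i)$ as $c_t\to 0$; that is the cleaving family (Figure~\ref{fig3}). For the converse it shows that when $V_F$ meets no $E(R_i)$ with $d(R_i;f)>0$, a nearby fiber entering near $V_1$ travels the whole positive chain and terminates either on $E(R_{k-1})$ (if $d(R_{k-1};f)=0$) or at the corner $E(R_{k-1})\cap E(R_k)$ (if $d(R_{k-1};f)<0$), never returning to $\partial N$; this is Figure~\ref{fig4}, and it simultaneously gives the triviality you wanted near $V_1\cap E(R_m)$ and excludes vanishing along the positive divisors. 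Once you incorporate this chain-wide picture, your outline becomes the paper's proof.
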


\begin{proof}
Set $f(x,y)=x^\alpha y^\beta F(x,y)$ with either $\alpha>0$ or $\beta>0$.
For convenience, we choose $R_3,\ldots,R_m$ such that $d(R_m;f)>0$ and $f_{R_m}$ is a monomial if $\alpha>0$ and 
that $d(R_3;f)>0$ and $f_{R_3}$ is a monomial if $\beta>0$.
Since the indices of the covectors $\{R_1,\ldots,R_m\}$ are assigned in
the counter-clockwise orientation, if $\alpha>0$ then there exists an index $k$ such that
$d(R_k;f)\leq 0$ and $d(R_i;f)>0$ for $i>k$.
Similarly, if $\beta>0$ then there exists an index $k'$ such that 
$d(R_i;f)>0$ for $3\leq i\leq k'$ and $d(R_{k'+1};f)\leq 0$. Note that $E(R_k)\cap E(R_{k'})=\emptyset$
when $\alpha>0$ and $\beta>0$.


Suppose that there exists $\varDelta(R_i;f)\in\Gamma_\infty^+(f)$ with $i\geq k$
such that $f_{R_i}(x,y)=0$ has a solution in $(\R\setminus\{0\})^2$.
Let $i_0$ be the largest index such that $d(R_{i_0};f)>0$ and $f_{R_i}(x,y)$ has a solution in $(\R\setminus\{0\})^2$.
We consider the real toric variety $X$ obtained by the admissible toric compactification
of $\R^2$ associated with $\{R_1,\ldots,R_m\}$.
Let $\gamma$ be a branch of $V_f$ in $N$ intersecting $E(R_{i_0})$
and being nearest to $E(R_{i_0+1})$.
By Lemma~\ref{lem2.1}~(3), 
$V_1\cap N$ is a short arc in $N$ intersecting $E(R_m)$ transversely, see Figure~\ref{fig3}.
Thus we can find a cleaving family between $V_1$ and $\gamma$ in $N$.
If there exists $\varDelta(R_i;f)\in\Gamma_\infty^+(f)$ with $3\leq i\leq k'$ 
such that $f_{R_i}(x,y)=0$ has a solution in $(\R\setminus\{0\})^2$
then there exists a cleaving family by the same reason.
Thus, in either case, we have $0\in B_f$.

\begin{figure}[htbp]
    \centerline{\input{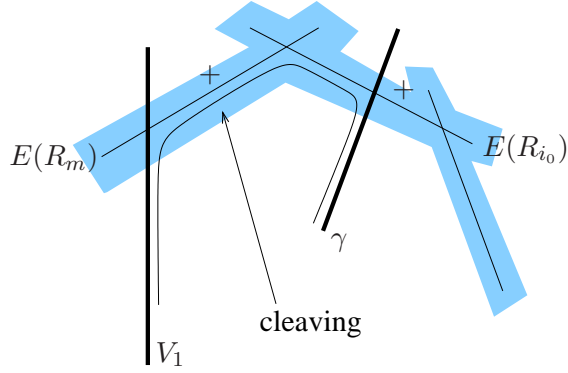}}
  \caption{A cleaving family between $V_1$ and $\gamma$.
The sign $+$ means that $E(R_i)$ satisfies $d(R_i;f)>0$.
\label{fig3}}
\end{figure}

Now we prove the converse.
Suppose that there does not exist $\varDelta(P;f)\in\Gamma_\infty^+(f)$
such that $f_P(x,y)=0$ has a solution in $(\R\setminus\{0\})^2$.
By Lemma~\ref{lem2.6}, it is enough to check that $f=0$ is not cleaving and not vanishing at infinity.
We first check that $f=0$ is not cleaving.
For any $\varDelta(R_i;f)\in \Gamma_\infty^+(f)$,
since $f_{R_i}(x,y)=0$ has no solution in $(\R\setminus\{0\})^2$,
$V_F$ does not intersect $E(R_i)$ by Lemma~\ref{lem2.2}.
If $V_F$ intersects $E(R_i)$ with $d(R_i;f)=0$ then,
by the same argument as in the proof of Lemma~\ref{lem3.3}, the fibration at infinity near $E(R_i)$ 
is trivial, see Figure~\ref{fig2}.
The triviality also holds in the case where $V_F$ intersects $E(R_i)$ with $d(R_i;f)<0$
as we have seen in the proof of Lemma~\ref{lem3.2}, see Figure~\ref{fig1}.
This shows that there is no cleaving family near the intersection of $V_F$ with $\cup_{i=3}^mE(R_i)$.

By Lemma~\ref{lem2.1}, it remains to show that
there is no cleaving family near the intersection of $V_1$ with $E(R_m)$
and near the intersection of $V_2$ with $E(R_3)$.
We only check the former case. The latter case is proved similarly.
On $U_m$, $f$ has the form
\[
f(u_m,v_m)=u_m^{d(R_m;f)}v_m^{d(R_1;f)}(g_m(v_m)+u_mh_m(u_m,v_m)),
\]
where $V_f\cap U_m$ corresponds to the curve $v_m^{d(R_1;f)}(g_m(v_m)+u_mh_m(u_m,v_m))=0$.
Since the covectors $R_3,\ldots,R_m$ are chosen such that $f_{R_m}$ is a monomial, 
$V_f$ intersects $E(R_m)$ only at $E(R_m)\cap V_1$ by Lemma~\ref{lem2.1}.
If $\Gamma_\infty^0(f)\ne \emptyset$ then a nearby fiber of $f$ passing near $V_1$
intersects $\cup_{i=3}^mE(R_i)$ at a point near 
$E(R_{k-1})\cap E(R_{k})$ as shown on the left in Figure~\ref{fig4}. 
Therefore, the fibration is trivial at infinity.
If $\Gamma_\infty^0(f)=\emptyset$ then nearby fibers intersect $E(R_{k-1})\cap E(R_{k})$ 
as shown on the right in Figure~\ref{fig4}
since $\varDelta(R_{k-1};f)\in\Gamma_\infty^-(f)$ and $\varDelta(R_{k};f)\in\Gamma_\infty^+(f)$. 
Therefore, the fibration is again trivial at infinity.
Thus $f=0$ is not cleaving at infinity.

\begin{figure}[htbp]
   \centerline{\input{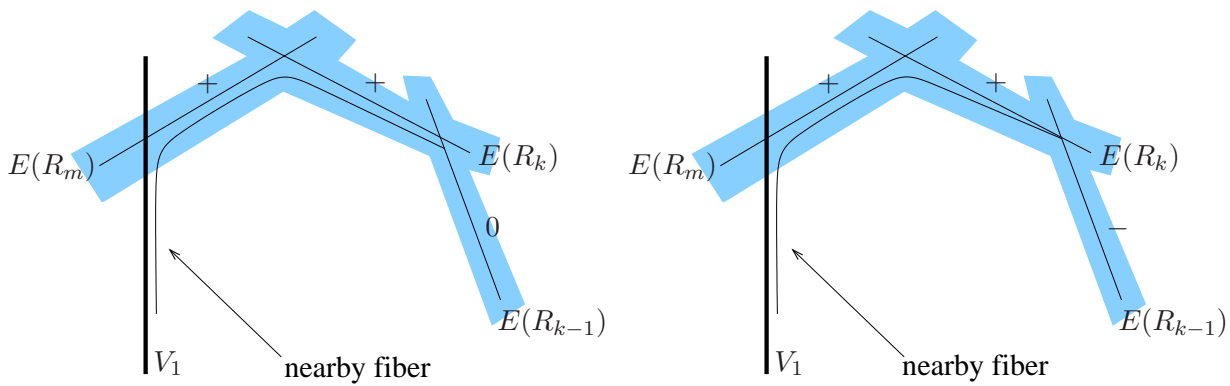}}
  \caption{The triviality of the fibration at infinity for nearby fibers passing near $V_1$.\label{fig4}}
\end{figure}

Next we check that $f=0$ is not vanishing at infinity.
As we explained in the proof of Lemma~\ref{lem3.3}, a vanishing family does not exist
in a neighborhood of $E(R_i)$ with $d(R_i;f)=0$.
It does not exist near $E(R_i)$ with $d(R_i;f)>0$ also since a nearby fiber cannot stay in $N$
as we had seen in Figure~\ref{fig4}. 
A vanishing family does not exist near $E(R_i)$ with $d(R_i;f)<0$
by the same reason as we had seen in the proof of Lemma~\ref{lem3.2}. 
This completes the proof.
\end{proof}

\begin{proof}[Proof of Proposition~\ref{prop3.1}]
The assertion follows from Lemmas~\ref{lem3.2}, \ref{lem3.3} and~\ref{lem3.4}.
\end{proof}

\begin{proof}[Proof of Theorem~\ref{thm1.1}]
If it is in case (ii), by applying Proposition~\ref{prop3.1} to 
$\tilde f(x,y)=f(x,y)-c$, we have $c\in B_f$.

If it is in case (iii) then $f-c$ has the form 
\[
   f(u_i,v_i)-c=v_i^{d(R_{i+1};f)}(\tilde g_i(v_i)+u_ih_i(u_i,v_i)),
\]
where $\tilde g_i(v_i)=g_i(v_i)-cv_i^{-d(R_{i+1};f)}$, and 
there exists a non-zero real root $s$ of $\tilde g_i(v_i)=0$ with multiplicity $2$.
The strict transform $V_{f-c}$ of $f-c=0$ intersects $E(R_i)$ at $(u_i,v_i)=(0,s)$ 
with multiplicity $2$.
Let $U$ be a small neighborhood of $(0,s)$ in $N$
such that $U\setminus E(R_i)$ consists of two connected components, say $U'$ and $U''$.
There are three cases: (1) $V_{f-c}$ intersects both of $U'$ and $U''$;
(2) $V_{f-c}$ intersects one of them and does not intersect the other;
(3) $V_{f-c}$ does not intersect both of $U'$ and $U''$.
In case~(2), there is a cleaving family and a vanishing family as shown in Figure~\ref{fig5}.
Thus we have $c\in B_f$.
\begin{figure}[htbp]
   \centerline{\input{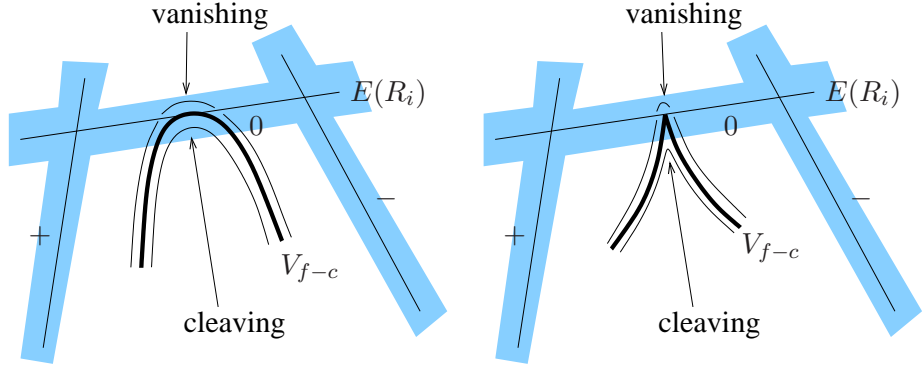}}
  \caption{Cleaving and vanishing at infinity on $E(R_i)$ with $\varDelta(R_i;f)\in\Gamma_\infty^0(f)$ (Case where $V_{f-c}$ is in one side).\label{fig5}}
\end{figure}

In case~(1), since the multiplicity is $2$, 
$V_{f-c}$ in $U$ consists of either two curves
intersecting each other transversely and also intersecting $E(R_i)$ transversely,
see on the left in Figure~\ref{fig6}, or one curve with multiplicity $2$
intersecting $E(R_i)$ transversely.
In the former case, $f=c$ is cleaving at infinity from both sides as shown in the figure.
In the latter case, $c\in\Sigma_f$.
In case~(3), $f=c$ has vanishing families from both sides, see on the right in Figure~\ref{fig6}.
In any case, we have $c\in B_f$.
\begin{figure}[htbp]
   \centerline{\input{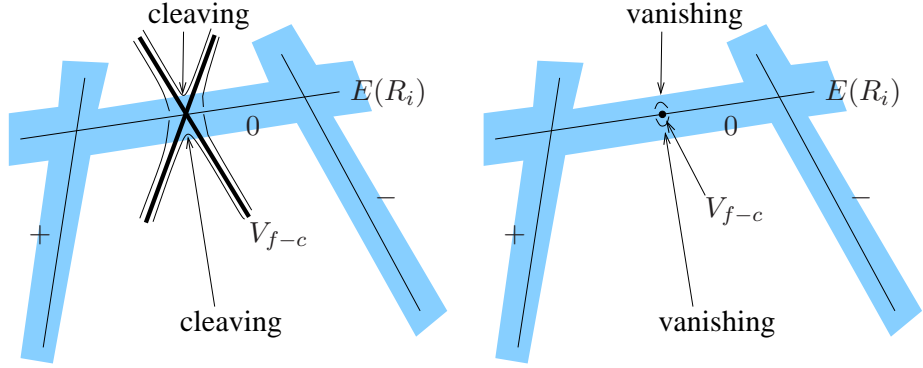}}
  \caption{Cleaving and vanishing at infinity on $E(R_i)$ with $\varDelta(R_i;f)\in\Gamma_\infty^0(f)$ (Case where $V_{f-c}$ is either in both sides or isolated).\label{fig6}}
\end{figure}

Conversely, if both of (ii) and (iii) are not satisfied then, applying 
Proposition~\ref{prop3.1} to $f(x,y)-c$,
we can conclude that $c\not\in B_f$ unless $c\in \Sigma_f$.
\end{proof}

\begin{rem}
The ``Morse condition'' on bad faces is cruciel especially in case~(1) in the above proof.
If the multiplicity $\mu$ is odd then $V_{f-c}$ can be one curve being tangent to $E(R_i)$ 
with multiplicity $\mu$ and intersects both of $U'$ and $U''$.
In this case, the fibration is trivial in this neighborhood. 
Hence we cannot generalize the assertion in the case where $\mu$ is odd.

If the multiplicity $\mu$ is even, $c\not\in\Sigma_f$ and it is in case~(1) then
there are at least two branches of $V_{f-c}$ passing though the intersection point $(0,s)$ 
and thus a cleaving family exists.
Therefore the assertion in Theorem~\ref{thm1.1} holds even if we replace
the ``Morse condition'' on bad faces into the ``even multiplicities''.
\end{rem}

\begin{ex}
Consider the polynomial function $f(x,y)=x(1+x^my^{2n})$, where $m,n\geq 1$.
From the Newton polygon $\varDelta(f)$, 
there are two covectors orthogonal to the face of $\varDelta(f)$, i.e.,
\[
Q_1=\begin{pmatrix} -2n \\ m \end{pmatrix},\;\,
Q_2=\begin{pmatrix} 2n \\ -m \end{pmatrix}.
\]
We can easily check that the conditions~(i) and~(iii) in Theorem~\ref{thm1.1} are not
satisfied. If $c\ne 0=f(0,0)$ then (ii) is also not satisfied.
When $c=0$, only the covector $Q_2$ satisfies $\varDelta(Q_2;\tilde f)\in\Gamma_\infty^+(\tilde f)$. 
We can easily check that (ii) is satisfied if and only if $m$ is odd. 
Thus $B_f=\{0\}$ if $m$ is odd and $B_f=\emptyset$ if $m$ is even.

We here explain how the cleaving and vanishing families appear in a real toric variety
in the case where $f(x,y)=x(1+xy^2)$.
Set 
\[
\begin{split}
&R_1=\begin{pmatrix} 1 \\ 0 \end{pmatrix},\;\,
R_2=\begin{pmatrix} 0 \\ 1 \end{pmatrix},\;\,
R_3=\begin{pmatrix} -1 \\ 1 \end{pmatrix},\;\,
R_4=Q_1=\begin{pmatrix} -2 \\ 1 \end{pmatrix},\\
&R_5=\begin{pmatrix} -1 \\ 0 \end{pmatrix},\;\,
R_6=Q_2=\begin{pmatrix} 2 \\ -1 \end{pmatrix},\;\,
R_7=R_1.
\end{split}
\]
These primitive covectors satisfy the conditions in Section~2 and 
the associated admissible toric compactification $X$ becomes as shown in Figure~\ref{fig7}.
We can see from the figure that 
there are two cleaving families up to equivalence relation defined in Definition~\ref{dfn2.7} 
and there is no vanishing family.
\end{ex}
\begin{figure}[htbp]
   \centerline{\input{fig8.pstex_t}}
  \caption{A connected component of $f=\ve$ and a connected component of $f=-\ve$,
with sufficiently small $\ve>0$, are described. 
Both of them are cleaving as $\ve\to 0$.\label{fig7}}
\end{figure}

\begin{ex}
Consider the polynomial function
\[
   f(x,y)=x+\frac{1}{m}x^my^m+\frac{2a}{m+1}x^{m+1}y^{m+1}+\frac{1}{m+2}x^{m+2}y^{m+2}
\]
with $m\geq 2$.
It has no singular point and hence $\Sigma_f=\emptyset$.

From the Newton polygon $\varDelta(f)$, the covectors orthogonal to the faces are
\[
Q_1=\begin{pmatrix} -m-2 \\ m+1 \end{pmatrix},\;\,
Q_2=\begin{pmatrix} 1 \\ -1 \end{pmatrix},\;\,
Q_3=\begin{pmatrix} m \\ 1-m \end{pmatrix}.
\]
Only the face $\varDelta(Q_2;f)$ is a bad face.
To apply Theorem~\ref{thm1.1}, we need to assume that 
$b_{Q_2}(t)=t^m(\frac{1}{m}+\frac{2a}{m+1}t+\frac{1}{m+2}t^2)$
is a Morse function on $\R\setminus\{0\}$. The critical points are the roots of
$\frac{db_{Q_2}}{dt}(t)=t^{m-1}(1+2at+t^2)=0$.
Thus $b_{Q_2}$ is Morse on $\R\setminus\{0\}$ if and only if $a\ne \pm 1$.
If $-1<a<1$ then $b_{Q_2}$ has no critical point on $\R\setminus\{0\}$.
If $|a|>1$ then 
$t_0=-a-\sqrt{a^2-1}$ and $t_1=-a+\sqrt{a^2-1}$ are the critical points of $b_{Q_2}$.
The face $\varDelta(Q_3;f)$ is in $\Gamma_\infty^+(f)$
and $f_{Q_3}(x,y)=x+x^my^m=0$ has a solution $(-1,-1)$ in $(\R\setminus\{0\})^2$.
Hence $0\in B_f$.
The bifurcation set $B_f$ is now determined for $|a|\ne 1$: 
$B_f=\{0, b_Q(t_0), b_Q(t_1)\}$ if $|a|>1$ and $B_f=\{0\}$ if $|a|<1$.

Now we explain how the cleaving and vanishing families appear in a real toric variety.
Set 
\[
\begin{split}
&R_1=\begin{pmatrix} 1 \\ 0 \end{pmatrix},\;\,
R_2=\begin{pmatrix} 0 \\ 1 \end{pmatrix},\;\,
R_3=\begin{pmatrix} -1 \\ 1 \end{pmatrix},\;\,
R_4=Q_1=\begin{pmatrix} -m-2 \\ m+1 \end{pmatrix},\\
&R_5=Q_2=\begin{pmatrix} 1 \\ -1 \end{pmatrix},\;\,
R_6=Q_3=\begin{pmatrix} m \\ 1-m \end{pmatrix},\\
&R_{6+k}=\begin{pmatrix} m-k \\ 1-m+k \end{pmatrix} (k=1,\ldots,m-3),\;\,
R_{m+4}=\begin{pmatrix} 2 \\ -1 \end{pmatrix},\;\,
R_{m+5}=R_1.
\end{split}
\]
These primitive covectors satisfy the conditions in Section~2 and 
the associated admissible toric compactification $X$ becomes as shown in Figure~\ref{fig8},
which is in the case where $m=8$ and $|a|>1$.
If $|a|<1$ then $V_f$ does not intersect $E(R_5)$.

\begin{figure}[htbp]
   \centerline{\input{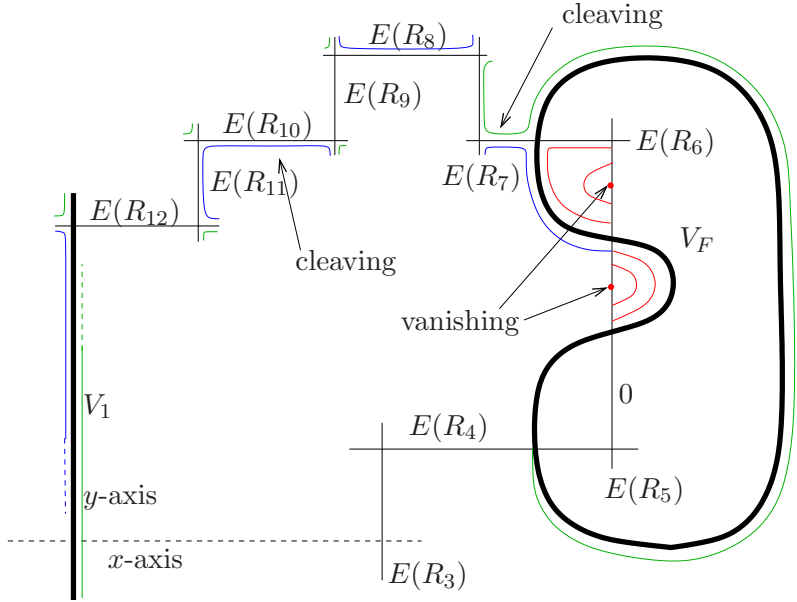}}
  \caption{A part of a connected component of $f=\ve$ and a part of a connected component of $f=-\ve$,
with sufficiently small $\ve>0$, are described. 
Both of them are cleaving as $\ve\to 0$.\label{fig8}}
\end{figure}

On the local chart $U_5$ with coordinates $(u_5,v_5)$, for each $j=0,1$, 
we have
\[
   f(u_5,v_5)-f(t_j)=(v_5-t_j)^2\hat g_j(v_5)+u_5v_5^8
\]
with $\hat g_j(t_j)\ne 0$.
Thus $V_{f-f(t_j)}$ is tangent to $E(R_5)$ at $(u_5,v_5)=(0,t_j)$ with multiplicity $2$.
This is in case~(2) in the proof of Theorem~\ref{thm1.1}. 
Hence we see that there are a cleaving family and a vanishing family for each $j=0,1$.
Since a vanishing family does not appear in the settings in Lemma~\ref{lem3.3} and~\ref{lem3.4},
we see that there is no other vanishing family.
There are two cleaving families with limit $f=0$ as shown in Figure~\ref{fig8}.
Here we count the numbers of cleaving and vanishing families 
up to equivalence relations in Definition~\ref{dfn2.7}.
In summary, this example has four cleaving families and two vanishing families.
For other $m$'s more than $1$, we can easily check that $f$ also has the same numbers of cleaving 
and vanishing families.
\end{ex}

\section{Proof of Theorem~\ref{thm4.1}}

In this section, we give the proof of Theorem~\ref{thm4.1},
which determines the number of cleaving and vanishing families
counted up to equivalence relations in Definition~\ref{dfn2.7}.

\begin{proof}[Proof of Theorem~\ref{thm4.1}]
Let $E^+$ and $E^0$ denote the union of $E(R_i)$'s with $\varDelta(R_i;f)\in\Gamma_\infty^+(f)$
and with $\varDelta(R_i;f)\in\Gamma_\infty^0(f)$, 
and let $N^+$ and $N^0$ denote a small, compact neighborhood of $E^+$ and $E^0$ in $X$, respectively.
A cleaving family appears either in $N^+$ or $N^0$, see Lemma~\ref{lem3.4} and
the proof of Theorem~\ref{thm1.1}.
A vanishing family appears only in $N^0$, see the proof of Theorem~\ref{thm1.1}.

First we observe it in $N^+$. We may assume $f(0,0)=0$ by replacing $f$ by $f(x,y)-f(0,0)$.
Suppose that $E^+\ne\emptyset$.
We may assume that $f$ has the form $f(x,y)=x^\alpha y^\beta F(x,y)$ with either $\alpha>0$ or $\beta>0$.
We set $E^+=E_x^+\cup E_y^+$, where $E^+_x=\cup_{i=k}^mE(R_i)$ and $E^+_y=\cup_{i=3}^{k'}E(R_i)$.
Here $k$ is the index such that $d(R_{k-1};f)\leq 0$ and $d(R_k;f)>0$ (cf. Figure~\ref{fig4})
and $k'$ is the index such that $d(R_{k'};f)>0$ and $d(R_{k'+1};f)\leq 0$.
Note that if $\alpha=0$ (resp. $\beta=0$) then $E_x^+$ (resp. $E_y^+$) is empty and that $E_x^+\cap E_y^+=\emptyset$.
Let $R_x^+$ (resp. $R_y^+$) be 
the sum of $r^+(R_i;f)$'s for $i\geq k$ (resp. $3\leq i\leq k'$) with $\varDelta(R_i;f)\in\Gamma^+_\infty(f)$.
Note that $R^+=R_x^++R_y^+$.

We first count the number of cleaving families in a compact neighborhood $N^+_x$ of $E^+_x$.
Remark that $R^+_x$ is equal to the number of intersection points $V_F\cap E^+_x$. 
A nearby fiber in $N^+_x$ yields
a ``cleaving'' if and only if both of the endpoints of the fiber in $N^+_x$ is on the boundary $\partial N^+_x$.
If an endpoint is not on $\partial N^+_x$ then it is on $E(R_{k-1})$ if $d(R_{k-1};f)=0$ and 
on the intersection $E(R_{k-1})\cap E(R_k)$ if $d(R_{k-1};f)<0$.
Such an endpoint can appear on all of the four quadrants on the chart $U_{k-1}$ 
corresponding to Cone$(R_{k-1},R_k)$.
Since $V_1$ intersects $E(R_m)$ at one point, $V_f$ intersects $E_x^+$ at $R^+_x+1$ points.
Adding the $4$ endpoints on $U_{k-1}$, there are totally $4R^+_x+8$ endpoints. 
Hence the curve $\{f=\ve\}\cup\{f=-\ve\}$ has $2R^+_x+4$ connected components in $N^+_x$.
However, the endpoints lying on $E(R_{k-1})$ do not contribute to ``cleavings''.
Moreover, there is no connected component both of whose endpoints are on $E(R_{k-1})$.
This can be checked as follows: Try to describe a curve in $N^+_x$ starting at one of the endpoints
on $E(R_{k-1})$. Then we meet $V_f$ before coming back near $E(R_{k-1})$. 
Thus the curve must go out from $\partial N^+_x$.
There are four connected components which do not contribute to ``cleavings''.
Hence the number of cleaving families in $N^+_x$ is $2R^+_x$.
This is true even if $E^+_x=\emptyset$ since there is no cleaving family in this case.

The number of cleaving families in a compact neighborhood $N^+_y$ of $E^+_y$
can be counted by the same way and it becomes $2R^+_y$.
Since $E_x^+\cap E_y^+=\emptyset$, the countings in $N^+_x$ and $N^+_y$ do not conflict.
Hence the total number of cleaving families in $N^+$ is $2R^+_x+2R^+_y=2R^+$.

Next we observe it in $N^0$.
Since $f$ has only isolated singularities, each critical point of $b_P(t)$ corresponds to
an intersection point of $V_F$ and $E^0$ as shown in Figures~\ref{fig5} and~\ref{fig6}.
In either case,  for each intersection point,
the sum of the number of cleaving families and that of vanishing families is $2$.
Hence the total number of cleaving and vanishing families in $N^0$ is $2R^0$.
This completes the proof.
\end{proof}

\section{An upperbound of $|B_f|$}

In this section, we do not assume that $f$ is non-degenerate on $\Gamma_\infty^+(f)\cup\Gamma_\infty^-(f)$
and also do not assume that $f_P$ is Morse on a bad face $\varDelta(P;f)\in\Gamma_\infty^0(f)$.

We will prove Theorem~\ref{thm5.1} by applying successive admissible toric modifications for 
each singularity on $\cup_{i=3}^mE(R_i)$ appearing due to degeneracies.
We first introduce an admissible toric modification.
Though an admissible toric modification is usually defined for a polynomial function or 
a locally analytic function, we define it for rational functions given as in~\eqref{eq2.1}.
Note that such a modification had been used in~\cite{ishikawa} 
for studying singularities at infinity of complex polynomial functions.

Let $U\subset \R^2$ be a small neighborhood of the origin and let $\hat f:U\to\R$ be
a real rational function on $U$ whose expansion is given by 
$\hat f(x,y)=\sum_{(m,n)}a_{m,n}x^my^n$, where $(m,n)\in\Z$ with $m>-M$ for some non-negative
integer $M$ and $n\geq 0$.
We define the Newton polygon $\varDelta^{\text{\rm loc}}(\hat f)$ of $\hat f$ 
by the convex hull of $\cup_{(m,n)}((m,n)+\R^2_{\geq 0})$,
where $\R_{\geq 0}=\{x\in\R\mid x\geq 0\}$ and
the union is taken for all $(m,n)$ such that $a_{m,n}\ne 0$.
For a given primitive covector $P={}^t(p,q)$ with $p,q>0$,
let $d(P;\hat f)$ denote the minimal value of the linear function $pX+qY$, 
where $(X,Y)\in\varDelta^{\text{\rm loc}}(\hat f)$. 
Set $\varDelta(P;\hat f):=\{(X,Y)\in\varDelta^{\text{\rm loc}}(\hat f)\mid pX+qY=d(P;\hat f)\}$,
which is called a {\it face} if $\dim \varDelta(P;\hat f)=1$.
The partial sum $\hat f_P(x,y)=\sum_{(m,n)\in\varDelta(P;\hat f)}a_{m,n}x^my^n$
is called the {\it boundary function} for the covector $P$. 
If $\varDelta(P;\hat f)$ is a face then it is called the {\it face function}.
A boundary function $\hat f_P$ is said to be {\it degenerate} if
$\frac{\partial \hat f_P}{\partial x}=\frac{\partial \hat f_P}{\partial y}=0$ 
has a solution in $(\R\setminus\{0\})^2$.
Otherwise it is said to be {\it non-degenerate}.

Let $\hat f$ be a rational function given as above and let $\hat Q_i={}^t(\hat p_i,\hat q_i)$, 
$i=1,\ldots,\hat n$, be primitive covectors such that
\begin{itemize}
\item[(1)] both $\hat p_i$ and $\hat q_i$ are positive;
\item[(2)] $\varDelta(\hat Q_i;\hat f)$ is a compact face;
\item[(3)] the indices are assigned in the counter-clockwise orientation.
\end{itemize}
Let $\hat R_i={}^t(\hat r_i,\hat s_i)$, $i=1,\ldots,\hat m$, 
be primitive covectors which satisfy the following:
\begin{itemize}
\item[(1)] $\hat R_1={}^t(1,0)$ and $\hat R_{\hat m}={}^t(0,1)$;
\item[(2)] both $\hat r_i$ and $\hat s_i$ are positive for each $\hat R_i$, $i=2,\ldots,\hat m-1$;
\item[(3)] $\hat Q_i$ is contained in $\{\hat R_2,\ldots,\hat R_{\hat m-1}\}$;
\item[(4)] the indices are assigned in the counter-clockwise orientation;
\item[(5)] the determinants of the matrices $(\hat R_i,\hat R_{i+1})$, $i=1,\ldots,\hat m-1$, are $1$.
\end{itemize}
For each Cone$(\hat R_i,\hat R_{i+1})$, $i=1,\ldots,\hat m-1$, 
an affine coordinate chart $(u_i,v_i)$ is defined by the coordinate transformation
\[
   x=u_i^{\hat r_i}v_i^{\hat r_{i+1}},\quad y=u_i^{\hat s_i}v_i^{\hat s_{i+1}}.
\]
Then a real variety $Y$ is obtained by gluing these coordinate charts, which is described as
\[
   Y=U\cup\left(\bigcup_{i=2}^{\hat m-1} E(\hat R_i)\right),
\]
where $E(\hat R_i)$ is the exceptional divisor corresponding to the covector $\hat R_i$.
Let $\pi:Y\to U$ be the associated proper mapping, which is called the {\it admissible toric modification
associated with $\{\hat R_1,\ldots,\hat R_{\hat m}\}$}.
For further information about toric modifications, see~\cite{oka}.
\vspace{5mm}

Suppose that $\hat f$ has the form
\[
\hat f(x,y)=x^d(y+c)^{d'}(y^\mu \hat g(y)+x\hat h(x,y)),
\]
where $d,d'\in\Z$, $c\ne 0$, $\hat g(y)$ is the expansion of a rational function
of one variable $y$ with $\hat g(0)\ne 0$,
and $h(x,y)$ is the expansion of a rational function of two variables $(x,y)$
with $|h(0,0)|<\infty$.
Let $\varDelta^-(\hat f)$, $\varDelta^0(\hat f)$ and $\varDelta^+(\hat f)$ denote the union
of the compact faces $\varDelta(P;\hat f)$ of $\varDelta^{\text{\rm loc}}(\hat f)$ 
with $d(P;\hat f)<0$, $d(P;\hat f)=0$ and $d(P;\hat f)>0$, respectively.
Set $\ell^-(\hat f)$ and $\ell^0(\hat f)$ to be $-1$ plus the number of lattice points in the segment 
obtained by projecting $\varDelta^-(\hat f)$ and $\varDelta^0(\hat f)$ to the second axis of $\R^2$
on which $\varDelta^{\text{\rm loc}}(\hat f)$ is described, respectively.
Set $\ell^+(\hat f)=\mu-\ell^-(\hat f)-\ell^0(\hat f)$.

\begin{dfn}
The integers $\ell^+(\hat f)$, $\ell^0(\hat f)$ and $\ell^-(\hat f)$ are called
the $(+)$-, $(0)$- and $(-)$-{\it height} of $\varDelta^{\text{\rm loc}}(\hat f)$, respectively.
\end{dfn}

These heights will be used in the proof of Theorem~\ref{thm5.1}.
\vspace{5mm}

Let $f$ be a polynomial function. We first apply an admissible toric compactification $Y_1\supset \R^2$
associated with primitive covectors $\{R_1,\ldots,R_m\}$ with respect to $\varDelta(f)$.
Suppose that $f_{R_i}$ is degenerate for a face $\varDelta(R_i;f)$ in $\Gamma_\infty^-(f)$.
On $U_i$, $f$ is given as~\eqref{eq2.1}.
Let $s_1,\ldots,s_\eta$ be non-zero real roots of $g_i(v_i)=0$ and $\mu_1,\ldots,\mu_\eta$ 
their multiplicities.
For some $\xi\in\{1,\ldots,\eta\}$ with $\mu_\xi\geq 2$, which exists since $f_{R_i}$ is degenerate,
we apply the change of coordinates
\[
   (x_1,y_1)=(u_i,v_i-s_\xi).
\]
We call $(x_1,y_1)$ {\it translated coordinates}.
The polynomial function $f$ can be extended to $Y_1$ as a rational function, and is given
on the chart $(x_1,y_1)$ as
\[
f^{1}(x_1,y_1)=
x_1^{d(R_i;f)}(y_1+s_\xi)^{d(R_{i+1};f)}(y_1^{\mu_\xi}g^{1}(y_1)+x_1h^{1}(x_1,y_1)),
\]
where $g^{1}(0)\ne 0$.

Assume that we have applied admissible toric modifications $\pi_i:Y_i\to Y_{i-1}$ 
for $i=2,\ldots,\sigma$ successively.
Let $U^\sigma$ be a neighborhood of the origin on the coordinate chart $(u_\sigma, v_\sigma)$
in $Y_\sigma$ obtained after the successive toric modifications and translations of coordinates.
We call $(u_\sigma, v_\sigma)$ {\it translated coordinates} also.
Let $f^{\sigma}$ be the restriction of the pull-back of $f$ to $U^\sigma$,
which is given as 
\[
f^{\sigma}(x_\sigma,y_\sigma)=
x_\sigma^{d_\sigma}(y_\sigma+s_\xi^{\sigma})^{d'_\sigma}(y_\sigma^{\mu_\sigma} g^{\sigma}(y_\sigma)+x_\sigma
h^{\sigma}(x_\sigma,y_\sigma)),
\]
where $d_\sigma, d'_\sigma\in\Z$ with $d_\sigma<0$, $s_\xi^{\sigma}\ne 0$, $\mu_\sigma\geq 2$ and $g^{\sigma}(0)\ne 0$.
Applying an admissible toric modification $\pi_{\sigma+1}:Y_{\sigma+1}\to Y_{\sigma}$
on $U^\sigma$ with respect to $\varDelta^{\text{\rm loc}}(f^{\sigma})$,
we obtain a sequence of admissible toric modifications inductively.

We say that a sequence $Y_\tau\to\cdots\to Y_1\supset\R^2$
of successive toric modifications is {\it terminated} 
if there are no translated coordinates for further toric modifications.
Note that a sequence of successive toric modifications is terminated in finite steps.
The finiteness is proved in~\cite[Lemma~4.3]{ishikawa} for complex polynomial case and 
the same proof works for real case also.

Let $Y_\tau\to\cdots \to Y_\sigma\to\cdots\to Y_1\supset\R^2$ be 
a sequence of admissible toric modifications, which is not necessary to be terminated.
Let $\{R^{\sigma}_1,\ldots,R^{\sigma}_{m_\sigma}\}$ be the primitive covectors
for the toric modification $\pi_{\sigma+1}:Y_{\sigma+1}\to Y_\sigma$
with respect to $\varDelta^{\text{\rm loc}}(f^{\sigma})$,
containing primitive covectors $\{Q^{\sigma}_1,\ldots,Q^{\sigma}_{n_\sigma}\}$ 
orthogonal to the compact faces of $\varDelta^{\text{\rm loc}}(f^{\sigma})$.
For each $j=2,\ldots,m_\sigma-1$, on the local chart $U^\sigma_j$ in $Y_{\sigma+1}$ 
corresponding to Cone$(R^{\sigma}_j, R^{\sigma}_{j+1})$, 
the pull-back $f^{\sigma}_j$ of $f$ is given as
\[
   f^{\sigma}_j(u_{\sigma,j},v_{\sigma,j})
   =u_{\sigma,j}^{d(Q^{\sigma}_j;f^{\sigma})}
     v_{\sigma,j}^{d(Q^{\sigma}_{j+1};f^{\sigma})}
   (g^{\sigma}_j(v_{\sigma,j})
   +u_{\sigma,j}h^{\sigma}_j(u_{\sigma,j},v_{\sigma,j})).
\]

Now we define an integer $\lambda(Q^{\sigma}_j;f^{\sigma})$ by
\begin{equation}\label{eq5.2}
\lambda(Q^{\sigma}_j;f^{\sigma})
=\begin{cases}
0 & d(Q^{\sigma}_j;f^{\sigma})>0 \\
r^0(Q^{\sigma}_j;f^{\sigma}) & d(Q^{\sigma}_j;f^{\sigma})=0 \\
\sum_{\xi\in\Xi_{\sigma,j}}(\mu_\xi -1) & d(Q^{\sigma}_j;f^{\sigma})<0,
\end{cases}
\end{equation}
where
$r^0(Q^{\sigma}_j;f^{\sigma})$ is the number of non-zero real roots of 
$\frac{\partial  b_{Q^{\sigma}_j}}{dt}(t)=0$,
$\Xi_{\sigma,j}$ is the set of indices of non-zero real multiple roots of $g^{\sigma}_j(v_{\sigma,j})=0$
at which we did not apply further successive toric modifications in $Y_\tau\to\cdots\to Y_1\supset\R^2$,
and $\mu_\xi$ is the multiplicity of the root with index $\xi\in\Xi_{\sigma,j}$.
Set $\epsilon_\sigma=1$ if the $(+)$-height of $\varDelta(f^\sigma_j)$ is more than or equal to $2$,
and set $\epsilon_\sigma=0$ otherwise.

Similarly, for the primitive covectors $\{Q_1,\ldots,Q_n\}$ orthogonal to the faces of $\varDelta(f)$, 
we define
\[
\lambda(Q_i;f)
=\begin{cases}
0 & d(Q_i;f)>0 \\
r^0(Q_i;f) & d(Q_i;f)=0 \\
\sum_{\xi\in\Xi_i}(\mu_\xi -1) & d(Q_i;f)<0,
\end{cases}
\]
where $r^0(Q_i;f)$ is the number of non-zero real roots of $\frac{\partial  b_{Q_i}}{dt}(t)=0$,
$\Xi_i$ is the set of indices of real multiple roots of $g_i(v_i)=0$ in~\eqref{eq2.1}
at which we did not apply further successive toric modifications,
and $\mu_\xi$ is the multiplicity of the root with index $\xi\in\Xi_i$.
Set $\epsilon=0$ if $R^+=0$ and $\epsilon=1$ if $R^+>0$ as in Theorem~\ref{thm5.1}.

\begin{prop}\label{prop5.2}
Let $Y_\tau\to\cdots\to Y_1\supset\R^2$ be a sequence of successive toric modifications
which is terminated. Then
\[
|B_f|\leq |\Sigma_f|+\epsilon+\sum_{i=1}^n\lambda(Q_i;f)
+\sum_{\sigma}\left(\epsilon_\sigma+\sum_{j=1}^{n_\sigma}\lambda(Q^{\sigma}_j;f^{\sigma})\right),
\]
where $\sigma$ runs over all indices of translated coordinates 
appearing in the successive toric modifications.
\end{prop}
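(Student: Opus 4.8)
The plan is to reduce the estimate to a count of atypical values of $f$ at infinity, localised at the exceptional divisors of the terminated tower $Y_\tau\to\cdots\to Y_1\supset\R^2$, and to carry out that count level by level, following the scheme used in the proof of Theorem~\ref{thm1.1}. Since $\Sigma_f\subset B_f$, it suffices to bound $|B_f\setminus\Sigma_f|$. By Lemma~\ref{lem2.6} every $c\in B_f\setminus\Sigma_f$ is the limit of a cleaving or a vanishing family, whose limit set lies in $\cup_{i=3}^m E(R_i)$ in $Y_1$. The core of the argument will be to show that such a limit set is always ``detected'' at one of the loci contributing to the right-hand side, and that each such locus produces at most the indicated number of new atypical values; distinct loci may of course produce the same value, which only helps the inequality.

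At the first level I would classify the divisors $E(R_i)$, $i=3,\dots,m$, by the sign of $d(R_i;f)$; by the counter-clockwise ordering they form three consecutive blocks, and I would treat each block as in Section~3. If $\varDelta(R_i;f)\in\Gamma_\infty^+(f)$, then, as in Lemma~\ref{lem3.4}, any cleaving family near $E(R_i)$ has limit value $f(0,0)$, and such a family exists precisely when $R^+>0$; this is the summand $\epsilon$. If $\varDelta(R_i;f)$ is a bad face, then repeating case~(iii) of the proof of Theorem~\ref{thm1.1} but without the Morse hypothesis --- using that between consecutive critical values of $b_{R_i}$ the function is monotone, so the fibration at infinity is trivial there --- a cleaving or vanishing family whose limit meets $E(R_i)$ can only sit over a non-zero critical point of $b_{R_i}$ and has the corresponding critical value as its limit, so at most $r^0(Q_i;f)=\lambda(Q_i;f)$ new values arise. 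If $\varDelta(R_i;f)\in\Gamma_\infty^-(f)$, then $d(R_i;f)<0$ and, by Lemma~\ref{lem3.2}, the fibration at infinity is trivial at every point of $E(R_i)$ at which $V_F$ is smooth and transverse; the only possible exceptions are the points $(0,s_\xi)$ with $s_\xi$ a non-zero real multiple root of $g_i$ of multiplicity $\mu_\xi$, and at such a point I would either leave it unresolved --- bounding the number of new atypical values created there by $\mu_\xi-1$ --- or perform the admissible toric modification centred at it and defer the count to the next level. Thus the first level contributes at most $\epsilon+\sum_{i=1}^n\lambda(Q_i;f)$, plus the deferred contributions.

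At a deeper level, for a modification $\pi_{\sigma+1}\colon Y_{\sigma+1}\to Y_\sigma$ whose centre is a point at which $f^\sigma$ has the normal form $x_\sigma^{d_\sigma}(y_\sigma+s_\xi^\sigma)^{d'_\sigma}(y_\sigma^{\mu_\sigma}g^\sigma+x_\sigma h^\sigma)$ with $d_\sigma<0$, I would run the same analysis on the new charts $U^\sigma_j$, where $f^\sigma_j$ has the form displayed before~\eqref{eq5.2}. The divisors $E(R^\sigma_j)$ again fall into three blocks by the sign of $d(Q^\sigma_j;f^\sigma)$: those with $d>0$ can support only cleaving families with a single limit value, namely the value of $f$ along the strict transform $V_f$, which passes through this block precisely when the $(+)$-height of $\varDelta(f^\sigma_j)$ is at least $2$, giving $\epsilon_\sigma$; those with $d=0$ are bad-face-like and contribute at most $r^0(Q^\sigma_j;f^\sigma)$ via the critical values of $b_{Q^\sigma_j}$; and those with $d<0$ contribute, at each non-zero real multiple root of $g^\sigma_j$, either at most $\mu_\xi-1$ (if left unresolved) or a deferred amount (if a further modification is performed there). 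Summing over $j$ and over all translated-coordinate charts $\sigma$, and using that the tower is terminated so that this sum is finite (\cite[Lemma~4.3]{ishikawa}), every atypical value of $f$ at infinity is detected at one of these loci; hence $|B_f\setminus\Sigma_f|$ is at most $\epsilon+\sum_{i=1}^n\lambda(Q_i;f)+\sum_\sigma(\epsilon_\sigma+\sum_{j=1}^{n_\sigma}\lambda(Q^\sigma_j;f^\sigma))$, which, added to $|\Sigma_f|$, is the assertion.

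The hard part will be twofold. First, the local bound that an unresolved point $(0,s_\xi)$ where the relevant boundary function has a non-zero real root of multiplicity $\mu_\xi$ creates at most $\mu_\xi-1$ new atypical values: to prove it I would perform an auxiliary admissible toric modification centred at that point with respect to $\varDelta^{\mathrm{loc}}$ of the local normal form, whose exceptional divisors carry $(+)$-, $(0)$- and $(-)$-heights summing to $\mu_\xi$ by the identity $\ell^++\ell^0+\ell^-=\mu_\xi$; each such divisor supports at most one value-creating phenomenon per unit of $(+)$- or $(0)$-height, every further modification strictly decreases the governing multiplicity, and tracking this through the whole resolution yields the bound, the ``$-1$'' reflecting that the value of $f$ along $V_f$ is counted once. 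Second, the completeness statement --- that no atypical value escapes the count --- which must be checked level by level: at every leaf the local function is a rational function rather than a polynomial, so one has to verify that the triviality-of-fibration arguments of Lemmas~\ref{lem3.2}--\ref{lem3.4} (Figures~\ref{fig1}--\ref{fig4}) still apply there, using that termination forces every boundary function at a leaf to be non-degenerate except for bad-face-type faces, which are exactly the ones carrying the $r^0$ terms. The remaining work is bookkeeping: distinguishing the covectors $Q^\sigma_j$ orthogonal to compact faces of $\varDelta^{\mathrm{loc}}(f^\sigma)$ from the auxiliary covectors $R^\sigma_j$, keeping track of which exceptional divisor belongs to which level, and checking that the countings in the various charts do not conflict.
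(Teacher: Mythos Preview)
Your overall scheme --- localising atypical values at the exceptional divisors and counting level by level according to the sign of $d(Q^\sigma_j;f^\sigma)$ --- matches the paper's. But you are working much harder than necessary because you have not exploited what ``terminated'' actually gives you. By definition, a terminated tower has no translated coordinates available for further modification, i.e.\ no unresolved non-zero real multiple roots remain anywhere; hence $\Xi_{\sigma,j}=\emptyset$ for every $(\sigma,j)$ and likewise $\Xi_i=\emptyset$. This is the paper's first sentence of the proof. It means that the $d<0$ case of $\lambda$ contributes $0$ at every level, and your ``first hard part'' --- the local $\mu_\xi-1$ bound at an unresolved point --- is simply not needed for Proposition~\ref{prop5.2}. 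That bound, together with the auxiliary modification and the height bookkeeping you outline, is exactly the content of the proof of Theorem~\ref{thm5.1} (the inequality $\Lambda_{\tau+1}\le\Lambda_\tau$), not of this proposition. You have in effect merged the two proofs.

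With the $d<0$ case eliminated, the paper's argument is short. The $d=0$ faces contribute at most $r^0(Q^\sigma_j;f^\sigma)$, as you say. For the $d>0$ faces the paper makes explicit a point you only gesture at: if $\ell^+(f^\sigma)\ge 2$ the cleaving families there share a single limit value, while if $\ell^+(f^\sigma)=1$ then even when a face $\varDelta(Q^\sigma_{j_0};f^\sigma)$ with $d>0$ exists, the fibration of nearby fibres near $E(Q^\sigma_{j_0})$ is trivial (Figure~\ref{fig9}, the ``stable boundary face'' phenomenon), so $\epsilon_\sigma=0$ is justified; and if $\ell^+(f^\sigma)=0$ there is nothing. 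Your ``second hard part'' (completeness) is then handled by the triviality arguments of Section~3 as you anticipate, but without any worry about degenerate $d<0$ leaves, since there are none. A minor side remark: termination is not via strict decrease of the multiplicity, as you suggest, but via \cite[Lemma~4.3]{ishikawa}; the multiplicity can stay constant under a single modification.
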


\begin{proof}
Since the sequence of successive toric modifications is terminated, 
$\Xi_{\sigma,j}=\emptyset$ for any $(\sigma,j)$.

We first check the contribution of the faces $\varDelta(Q^\sigma_j;f^\sigma)$
with $d(Q^\sigma_j;f^\sigma)>0$ to $|B_f|$.
Consider the variety $Y_{\sigma+1}$ obtained by an admissible toric modification 
$\pi_{\sigma+1}:Y_{\sigma+1}\to Y_\sigma$ with respect to $\varDelta^{\text{\rm loc}}(f^\sigma)$.
Let $\ell^+(f^\sigma)$ be the $(+)$-height of $\varDelta^{\text{loc}}(f^\sigma)$.
Suppose that $\ell^+(f^\sigma)\geq 2$.
If there are cleaving families near $E(Q^\sigma_j)$ with $d(Q^\sigma_j;f^\sigma)>0$  
then their limits correspond to the same value in $B_f$.
Hence its contribution is at most $1$. If $\ell^+(f^\sigma)=0$ then there is no contribution.
If $\ell^+(f^\sigma)=1$ and there is no face $\varDelta(P;f^\sigma)$ with 
$d(P;f^\sigma)>0$ then there is no contribution also.
Suppose that $\ell^+(f^\sigma)=1$ and there is such a face, say $\varDelta(Q^\sigma_{j_0};f^\sigma)$.
Then, as shown in Figure~\ref{fig9}, we see that the fibration of nearby fibers
passing near $E(Q^\sigma_{j_0})$ is trivial. 
Hence the contribution of the face $\varDelta(Q^\sigma_{j_0};f^\sigma)$ is $0$. 
There is no vanishing family in a neighborhood of $E(Q^\sigma_j)$ 
with $d(Q^\sigma_j;f^\sigma)>0$ in $Y_{\sigma+1}$.
Thus the contribution of the faces $\varDelta(Q^\sigma_j;f^\sigma)$ with 
$d(Q^\sigma_j;f^\sigma)>0$ is at most $\epsilon_\sigma$.
\begin{figure}[htbp]
   \centerline{\input{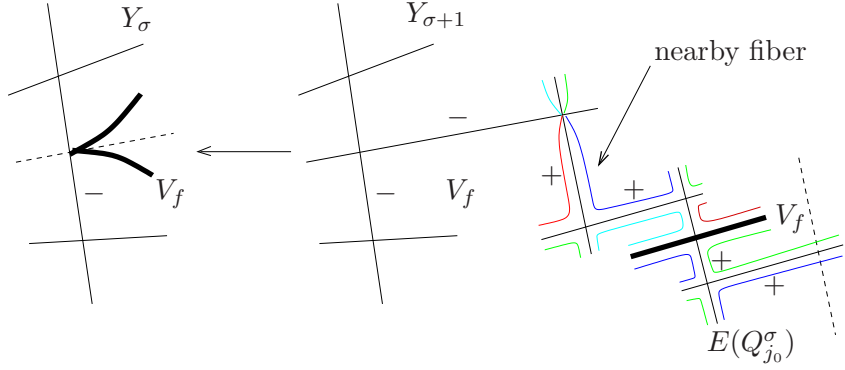}}
  \caption{The triviality of the fibration in the case $\ell^+(f^\sigma)=1$.\label{fig9}}
\end{figure}

Next we check the contribution of the faces $\varDelta(Q^\sigma_j;f^\sigma)$ 
with $d(Q^\sigma_j;f^\sigma)=0$.
The number of values appearing as their limits is at most the number of non-zero real roots 
of $\frac{db_{Q^\sigma_j}}{dt}(t)=0$.
Hence the contribution is at most $r^0(Q^\sigma_j;f^\sigma)$.

The same observation can be applied to neighborhoods of the divisors $E(Q_i)$, $i=1,\ldots,n$,
and we have the upper bound $\epsilon+\sum_{i=1}^n\lambda(Q_i;f)$ of the contribution.
This completes the proof.
\end{proof}

\begin{rem}
The phenomenon having the triviality of the fibration in the case $\ell^+=1$, shown in Figure~\ref{fig9},
appears in complex polynomial case also. A face $\varDelta(P;f^\sigma)$ with 
$d(P;f^\sigma)>0$ in the case $\ell^+=1$ is called a {\it stable boundary face} 
in~\cite[Definition~5.5]{ishikawa}. 
The exception of such a face is pointed out, for instance, in~\cite[Example 2.9~(3)]{blh} also.
\end{rem}

\begin{proof}[Proof of Theorem~\ref{thm5.1}]
Let $Y_\tau\to\cdots\to Y_1\supset\R^2$ be a sequence of successive toric modifications
which is not terminated.
Set
\[
   \Lambda_\tau=\epsilon+\sum_{i=1}^n\lambda(Q_i;f)
   +\sum_{\sigma=1}^\tau\left(\epsilon_\sigma+\sum_{j=1}^{n_\sigma}\lambda(Q^{\sigma}_j;f^{\sigma})\right).
\]
We first prove that this sum does not increase after an admissible toric modification
$\pi_{\tau+1}:Y_{\tau+1}\to Y_\tau$, i.e., 
prove the inequality $\Lambda_{\tau+1}\leq\Lambda_\tau$.

Apply a toric modification $\pi_{\tau+1}$ at the origin of translated coordinates 
$(x_\tau, y_\tau)$.
The pull-back $f^{\tau+1}=\pi_{\tau+1}^*f^\tau$ of $f$ has the form
\[
f^{\tau+1}(x_{\tau+1},y_{\tau+1})=x_{\tau+1}^{d_{\tau+1}}
(y_{\tau+1}+s_\xi)^{d'_{\tau+1}} 
(y_{\tau+1}^{\mu_\xi}g^{\tau+1}(y_{\tau+1})+x_{\tau+1}h^{\tau+1}(x_{\tau+1},y_{\tau+1})),
\]
where $d_{\tau+1}, d'_{\tau+1}\in\Z$ with $d_{\tau+1}<0$, 
$s_\xi\ne 0$, $\mu_\xi\geq 2$ and $g^{{\tau+1}}(0)\ne 0$.
Let $\ell^+$, $\ell^0$ and $\ell^-$ denote the
$(+)$-, $(0)$- and $(-)$-heights of $\varDelta^{\text{\rm loc}}(f^{\tau+1})$, respectively.
Note that $\ell^++\ell^0+\ell^-\leq \mu_\xi$.

We will prove that the total contribution of the faces $\varDelta(Q^{\tau+1}_j;f^{\tau+1})$ 
to $\Lambda_{\tau+1}$ is at most $\ell:=\ell^++\ell^0+\ell^--1$.
From $\varDelta^{\text{\rm loc}}(f^{\tau+1})$, we see that the contribution 
$\sum_{\xi\in\Xi_{\tau+1,j}}(\mu_\xi -1)$ in~\eqref{eq5.2} in the case 
$d(Q^{\tau+1}_j;f^{\tau+1})<0$ is at most $\ell^--1$.
The contribution in the case $d(Q^{\tau+1}_j;f^{\tau+1})=0$ is at most $\ell^0$ and 
the contribution in the case $d(Q^{\tau+1}_j;f^{\tau+1})>0$ is $\epsilon_{\tau+1}$.
Hence if $\ell^->0$ then the total contribution is at most $\ell$.
Suppose that $\ell^-=0$. If $\ell^+=0$ then the face $\varDelta(P;f^{\tau+1})$
with $d(P;f^{\tau+1})=0$ contains the origin $(0,0)$ and 
the contribution in the case $d(Q^{\tau+1}_j;f^{\tau+1})=0$ becomes at most $\ell^0-1$.
Hence the total contribution is at most $\ell$.
If $\ell^+\geq 2$ then 
the contribution in the case $d(Q^{\tau+1}_j;f^{\tau+1})>0$ becomes at most $\ell^+-1$,
and hence the total contribution is also at most $\ell$.
If $\ell^+=1$ then $\epsilon_{\tau+1}=0$, i.e., the contribution in the case $d(Q^{\tau+1}_j;f^{\tau+1})>0$ is $0$.
Hence the total contribution is at most $\ell$.
Since $\ell:=\ell^++\ell^0+\ell^--1\leq \mu_\xi-1$, we have
\[
   \Lambda_{\tau+1}\leq \epsilon+\sum_{i=1}^n\lambda(Q_i;f)
   +\sum_{\sigma=1}^\tau\left(\epsilon_\sigma+\sum_{j=1}^{n_\sigma}\lambda(Q^{\sigma}_j;f^{\sigma})\right)
   -(\mu_\xi-1)+\ell \leq \Lambda_\tau.
\]
Thus $\Lambda_\tau$ does not increase.
By the same argument, we have $\Lambda_1\leq\Lambda_0:=\epsilon+\sum_{i=1}^n\lambda(Q_i;f)$.

Suppose that a sequence of successive toric modifications is terminated at $\tau=\tau_0$.
By Proposition~\ref{prop5.2} we have $|B_f|\leq |\Sigma_f|+\Lambda_{\tau_0}$.
We then apply the inequality $\Lambda_{\tau+1}\leq\Lambda_\tau$ inductively:
\[
\begin{split}
|B_f|&\leq |\Sigma_f|+\Lambda_{\tau_0}
\leq |\Sigma_f|+\Lambda_{\tau_0-1}
\leq \cdots\leq |\Sigma_f|+\Lambda_1 \\
&\leq |\Sigma_f|+\Lambda_0=
|\Sigma_f|+\epsilon+R^0+\sum_{\varDelta(P;f)\in\Gamma_\infty^-(f)}\mu(P;f).
\end{split}
\]
This completes the proof.
\end{proof}

\begin{rem}
Let $Y_\tau\to\cdots\to Y_1\supset\R^2$ be a sequence of successive toric modifications
which is terminated. Then a vanishing family appears only in a neighborhood of 
a divisor $E(Q^\sigma_j)$ with $d(Q^\sigma_j;f^\sigma)=0$.
\end{rem}


\begin{thebibliography}{99}

\bibitem{blh}
E.~Artal Bartolo, I,~Luengo, A. Melle-Hern\'andez,
{\it High school algebra of the theory of dicritical divisors:
Atypical fibers for special pencils and polynomials}
J. Algebra Appl. {\bf 14} (2015), no. 9, 1540009, 26 pp. 

\bibitem{bp}
A.~Bodin, A.~Pichon, 
{\it Meromorphic functions, bifurcation sets and fibred links},
Math. Res. Lett. {\bf 14} (2007), no. 3, 413--422.

\bibitem{cp}
M. Coste, M.J. de la Puente, 
{\it Atypical values at infinity of a polynomial function on the real plane: an erratum, and an algorithmic criterion},
J. Pure Appl. Algebra {\bf 162} (2001), no. 1, 23--35. 

\bibitem{dt}
L.G. Dias, M. Tib\u{a}r, 
{\it Detecting bifurcation values at infinity of real polynomials},
Math. Z. {\bf 279} (2015), no. 1-2, 311--319.

\bibitem{HL1984} 
{H.V. H\`a and D.T. L\^e,}
{\it Sur la topologie des polyn\^omes complexes,} Acta Math. Vietnam. {\bf 9},
(1984) 21--32.


\bibitem{HaHV1989-2} 
H.V. H\`a and L.A. Nguy\^en,
{\it Le comportement g\'eoom\'etrique \`a l'infini des polyn\^omes de deux
variables complexes,} C. R. Acad. Sci., Paris, S\'erie I, {\bf 309}, No. 3,
(1989) 183--186.

\bibitem{hn}
H.V. H\`a and T.T. Nguy\^en,
{\it Atypical values at infinity of polynomial and rational functions on an algebraic surface in $\R^n$},
Acta Math. Vietnamica {\bf 36} (2011) no. 2, 537--553.



\bibitem{ishikawa}
M. Ishikawa, 
{\it The bifurcation set of a complex polynomial function of two variables and the Newton polygons of singularities at infinity}, J. Math. Soc. Japan {\bf 54} (2002), no. 1, 161--196. 

\bibitem{Kouchnirenko1976} 
A. G. Kouchnirenko, 
{\it Polyhedres de Newton et nombre de Milnor}, Invent. Math. {\bf 32} (1976), 1--31. 

\bibitem{lo}
V.T.~Le, M.~Oka,
{\it Estimation of the number of the critical values at infinity of a polynomial function 
$f:\mathbf C^2\to\mathbf C$}, Publ. Res. Inst. Math. Sci. {\bf 31} (1995), no. 4, 577--598.

\bibitem{nz}
A. Nemethi, A. Zaharia, 
{\it Milnor fibration at infinity},
Indag. Math. (N.S.) {\bf 3} (1992), no. 3, 323--335.

\bibitem{oka}
M. Oka, Non-degenerate Complete Intersection Singularity,
Actualit\'es Math., Hermann, 1998.

\bibitem{son}
T.S. Pham, 
{\it On the topology of the Newton boundary at infinity},
J. Math. Soc. Japan {\bf 60} (2008), no. 4, 1065--1081.

\bibitem{thang}
T.T.~Nguyen,
{\it Bifurcation set, M-tameness, asymptotic critical values and Newton polyhedrons},
Kodai Math. J. {\bf 36} (2013), no. 1, 77--90.

\bibitem{Suzuki1974}
M. Suzuki,
{\it Propri\'et\'es topologiques des polyn\^omes de deux variables complexes,
et automorphismes alg\'ebriques de l'espace  ${\Bbb C}^2$,}  J. Math. Soc.
Japan, {\bf 26}, (1974) 241--257.


\bibitem{tz}
M. Tib\u{a}r, A. Zaharia,
{\it Asymptotic behaviour of families of real curves},
Manuscripta Math. {\bf 99} (1999), 383--393.

\bibitem{zaharia}
A. Zaharia,
{\it On the bifurcation set of a polynomial function and Newton boundary, II},
Kodai Math. J. {\bf 19} (1996), 218--223.

\end{thebibliography}
\end{document}